\documentclass[12pt]{amsart}
\usepackage{amssymb}
\usepackage{amscd}
\usepackage{amsfonts}
\usepackage[all]{xy}
\usepackage{eucal}
\usepackage[T1]{fontenc}

\newcommand{\Mo}{(M,\omega )}
\newcommand{\Wo}{(W,\omega_W )}

\newcommand\Symp{\operatorname{Symp}}
\newcommand\Ham{\operatorname{Ham}}

\newtheorem{theorem}[subsection]{Theorem}
\newtheorem{proposition}[subsection]{Proposition}
\newtheorem{lemma}[subsection]{Lemma}
\newtheorem{cor}[subsection]{Corollary}

\theoremstyle{definition}

\newtheorem{example}[subsection]{Example}
\theoremstyle{remark}
\newtheorem{remark}[subsection]{Remark}

\title{On nondegenerate coupling forms}
\author{Jarek K\c edra}
\address{JK: University of Aberdeen, University of Szczecin}
\email{kedra@abdn.ac.uk}
\author{Aleksy Tralle}
\address{AT: University of Warmia and Mazury in Olsztyn}
\email{tralle@matman.uwm.edu.pl}
\author{Artur Woike}
\address{AW: University of Warmia and Mazury in Olsztyn}
\email{awoike@matman.uwm.edu.pl}

\begin{document}

\begin{abstract}
The aim of the present paper is to investigate new classes of
symplectically fat fibre bundles.  We prove a general existence
theorem for fat vectors with respect to the canonical invariant
connections.  Based on this result we give new proofs of some
constructions of symplectic structures. This includes twistor bundles
and locally homogeneous complex manifolds. The proofs are conceptually simpler and
allow for obtaining more general results.
\end{abstract}


\maketitle

\section{Introduction}\label{S:intro}

\subsection{Fat vectors}\label{SS:vectors}
Let  $G\rightarrow P\rightarrow B$ 
be a principal bundle with a connection.
Let $\theta$ and $\Theta$ be the connection one-form and
the curvature form of the connection, respectively.  Both forms have
values in the Lie algebra $\mathfrak{g}$ of the group $G$. 
Denote the pairing between $\mathfrak{g}$ and
its dual $\mathfrak{g}^*$ by $\langle\,,\rangle$.  By definition, a vector
$u\in\mathfrak{g}^*$ is {\bf {\em  fat}}, if the two--form
$$
(X,Y)\rightarrow \langle\Theta(X,Y),u\rangle
$$
is nondegenerate for all {\it horizontal} vectors $X,Y$. Note
that if a connection admits at least one fat vector then it admits the
whole coadjoint orbit of fat vectors. 

Let $\Mo$ be a symplectic manifold with a Hamiltonian action
of a group $G$ and the moment map $\Psi:M\to \mathfrak g^*$. 
Consider the associated Hamiltonian bundle
$$
\Mo \to E:=P\times_G M \to B.
$$

Sternberg \cite{MR0458486} constructed a certain closed two--form
$\Omega\in\Omega^2(E)$ associated with the connection $\theta$.  It is
called the {\bf {\em coupling form}} and pulls back to the symplectic form
on each fibre and it is degenerate in general. However, if the image
of the moment map consists of fat vectors then the coupling form is
nondegenerate, hence symplectic.  This was observed by Weinstein in
\cite[Theorem 3.2]{MR82a:53038} where he used this idea to give a new
construction of symplectic manifolds.  In the sequel, the bundles with
a nondegenerate coupling form will be called {\bf {\em symplectically fat.}}
Let us state the result of Sternberg and Weinstein precisely.

\begin{theorem}[Sternebrg-Weinstein] 
Let $G\rightarrow P\rightarrow B$ be a principal bundle. Let $\Mo$ be a
a symplectic manifold with a Hamiltonian $G$-action and the moment map
$\Psi: M\rightarrow\mathfrak{g}^*$.  If there exists a connection in the
principal bundle such that all vectors in $\Psi(M)\subset\mathfrak{g}^*$
are fat, then the coupling form on the total space of the associated
bundle
$$
F\rightarrow P\times_G F\rightarrow B
$$
is symplectic.
\end{theorem}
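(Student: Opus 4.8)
The plan is to recall Sternberg's minimal coupling construction of $\Omega$ and then read off nondegeneracy from the block structure that $\Omega$ acquires with respect to the induced connection on $E=P\times_G F$ (here I write $M$ for the fibre $F$, as in the discussion preceding the theorem). First I would describe $\Omega$ concretely. On the product $P\times M$ with its diagonal $G$-action, form the two-form
$$
\widetilde\Omega=\operatorname{pr}_M^*\omega-d\langle\Psi,\theta\rangle,
$$
where $\langle\Psi,\theta\rangle\in\Omega^1(P\times M;\B R)$ is the scalar one-form obtained by pairing the pullback of the moment map $\Psi$ with the pullback of the connection form $\theta$. Since $\omega$ is closed, $\widetilde\Omega$ is closed. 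Using the moment map identity $\iota_{\xi_M}\omega=d\langle\Psi,\xi\rangle$ together with the $\Ad^*$-equivariance of $\Psi$ and the $\Ad$-equivariance of $\theta$, one checks that $\widetilde\Omega$ is $G$-invariant and that $\iota_\zeta\widetilde\Omega=0$ for every fundamental vector field $\zeta$ of the $G$-action; hence $\widetilde\Omega$ is basic and descends to a closed two-form $\Omega$ on $E$, which is the coupling form.

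The second step is to analyze $\Omega$ pointwise. The connection $\theta$ induces a splitting $T_eE=H_e\oplus V_e$ at every $e=[p,m]$, where $V_e$ is tangent to the fibre (identified with $T_mM$ via $m\mapsto[p,m]$) and $H_e$ is the image of the horizontal subspace of $T_pP$. I claim $\Omega_e$ is block diagonal for this splitting. (i) On $V_e$ it equals $\omega_m$: pulling $\widetilde\Omega$ back along $m\mapsto(p,m)$ kills $\theta$, since vectors tangent to $\{p\}\times M$ have no $P$-component, so only $\operatorname{pr}_M^*\omega$ survives. (ii) $\Omega_e(H_e,V_e)=0$: for a horizontal lift $\widetilde X$ (so $\theta(\widetilde X)=0$, and its $M$-component is zero) and a vertical vector $w$, every term of $\widetilde\Omega(\widetilde X,w)$ vanishes, because $\operatorname{pr}_M^*\omega$ sees no $P$-component of $\widetilde X$, the $d\langle\Psi,\cdot\rangle\wedge\theta$ part dies on $\widetilde X$ by horizontality, and the $\langle\Psi,d\theta\rangle$ part needs two $P$-components. (iii) On $H_e$, writing $d\langle\Psi,\theta\rangle=\sum_a d\Psi_a\wedge\theta^a+\sum_a\Psi_a\,d\theta^a$ and using the structure equation $d\theta=\Theta-\tfrac12[\theta,\theta]$ together with $\theta(\widetilde X)=\theta(\widetilde Y)=0$, one gets $\widetilde\Omega(\widetilde X,\widetilde Y)=-\langle\Psi(m),\Theta(\widetilde X,\widetilde Y)\rangle$; that is, $\Omega_e|_{H_e}$ is, up to sign, the two-form $(X,Y)\mapsto\langle\Theta(X,Y),\Psi(m)\rangle$ on horizontal vectors.

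Finally, a bilinear form with this block-diagonal shape is nondegenerate if and only if each diagonal block is. The block on $V_e$ is $\omega_m$, which is always nondegenerate. The block on $H_e$ is nondegenerate precisely when $\Psi(m)\in\mathfrak g^*$ is a fat vector for the connection; this holds for every $m\in M$ by hypothesis, since $\Psi(m)\in\Psi(M)$. Hence $\Omega_e$ is nondegenerate at every $e\in E$, and, being closed, $\Omega$ is symplectic.

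The routine parts are the coordinate manipulations in step two and the sign-checking in the descent. The genuine crux—the part I expect to require the most care—is establishing the block-diagonal structure, especially item (iii): identifying the horizontal block with the $\Psi$-twisted curvature via the structure equation, and making sure that $\widetilde\Omega$ is really basic (this is the only place the moment map hypothesis is used, through the identity $\iota_{\xi_M}\omega=d\langle\Psi,\xi\rangle$ and equivariance). Once the structural fact that the coupling form diagonalizes into the fibre form and the curvature pairing is in hand, the conclusion is immediate from the definition of fatness.
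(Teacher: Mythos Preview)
Your argument is correct and follows the same route the paper takes in Section~\ref{S:weinstein}: define the coupling form so that the connection splitting $T_eE=H_e\oplus V_e$ is $\Omega$-orthogonal, identify the vertical block with $\omega$ and the horizontal block with the pairing $\langle\Psi(m),\Theta(\,\cdot\,,\,\cdot\,)\rangle$, and read off nondegeneracy from fatness. The only difference is packaging: the paper writes down $\Omega$ directly on $E$ in block form and cites closedness from Guillemin--Lerman--Sternberg and McDuff--Salamon, whereas you obtain $\Omega$ by descent of the manifestly closed form $\widetilde\Omega=\operatorname{pr}_M^*\omega-d\langle\Psi,\theta\rangle$ from $P\times M$, which makes closedness automatic at the cost of checking that $\widetilde\Omega$ is basic.
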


Notice that, if the base is symplectic then, according to Thurston
\cite{MR53:6578}, the existence of a coupling form suffices to construct a
fibrewise symplectic form.  Thus the most important and interesting
examples provided by fat bundles are the ones with nonsymplectic or
highly connected bases, e.g. spheres.

Another interesting feature of fat bundles is that they provide
a rich source of nontrivial symplectic characteristic classes.
This follows from the functoriality of the coupling form.
More precisely, if $\Omega $ is the coupling form for a Hamiltonian
connection in a bundle 
$$
(M^{2n},\omega) \to E \stackrel{\pi}\to ~B
$$ 
then the fibre integrals 
$\mu_k:=\pi_![\Omega^{n+k}]\in H^{2k}(B\Ham\Mo)$ 
define Hamiltonian characteristic classes. Thus, if the coupling form
is nondegenerate then the fibre integral of the top power is a nonzero
top cohomology class of the base. If the base is a sphere, then the
corresponding fibre integral in the universal bundle is an
indecomposable class.  This is the first step to the understanding the
ring structure of $H^*(B\Ham\Mo))$.

Certain explicit examples of symplectically fat bundles are discussed
by Guillemin, Lerman and Sternberg in \cite{MR98d:58074,MR952456}
(mostly fibrations of coadjoint orbits over coadjoint orbits) and by
the first two authors in \cite{MR1414780}.  Some finiteness results
for fat bundles were obtained by Derdzi\'nski and Rigas in
\cite{MR610960} and Chaves in \cite{MR1286927}. Notice, however, that
they investigate quite strong fatness conditions as their motivation
is to construct metrics of positive sectional curvature.

\subsection{The main results}
\begin{enumerate}
\item
In Theorem \ref{T:weinstein}, we establish an equivalence between the
nondegeneracy of a coupling form and the existence of a fat
vector. 

\item
In Theorem \ref{T:lerman}, we describe fat vectors with respect to the
canonical invariant connection in a principal bundle of the form
$$
H\rightarrow G\rightarrow G/H.
$$ 
This has been done by Lerman in \cite{MR952456} for compact semisimple Lie
groups. A compact semisimple Lie group is a compact real form of a
complex semisimple group. We observe that Lerman's proof works also
for noncompact real forms.  This gives a generalization of his result
and has applications, going beyond the examples known from \cite{MR98d:58074}
and \cite{MR952456}. 

\item
An application of the above result is a certain duality between
fat bundles over dual symmetric spaces. This duality is discussed in
Section \ref{S:duality}.  Examples include pairs where one manifold
is complex and not K\"ahler and the second is K\"ahler.

\item
In Theorem \ref{T:pinched}, we prove that the orthonormal frame bundle
over a manifold of pinched curvature with sufficiently small pinching
constant admits fat vectors. As a consequence, we get a conceptually
simpler proof of Reznikov's construction of symplectic forms on
twistor bundles \cite{MR1225431}. In fact, the result of Reznikov is a
consequence of the existence of a fat vector.

\item
The same method yields the structure of a symplectically fat fibre
bundle on a locally homogeneous complex manifold in the sense of
Griffiths and Schmid \cite{MR0259958}, see Theorem \ref{T:lhm}. A special
case is also mentioned by Amor\'os et al. in \cite[Remark 6.18]{MR97d:32037}. 
We see that this is an exemplification of a
general construction of a fat bundle.

\item
In Theorem \ref{T:characteristic}, we show that the $\mu_n$ class is
indecomposable in the cohomology ring $H^*(B\Ham\Mo)$ of the
classifying space of the group of Hamiltonian diffeomorphisms of a
certain coadjoint orbit of $SO(2n).$

\item
In Section \ref{S:infty}, we prove that the tautological bundle
over an infinite dimensional space of symplectic configurations
(in the sense of Gal and K\k edra \cite{MR2219218})
admits fat vectors. This provides a large class of examples of
infinite dimensional symplectic manifolds.
\end{enumerate}

\begin{remark}
Fat bundles have a physical meaning.  The standard model of
elementary particles is a geometry of a certain principal bundle over
the spacetime. The coupling form was found by Sternberg as a
description of the Yang-Mills field interacting with a particle
arising from an irreducible representation of the structure group.

A reduction of the structure group in the standard model is known as
breaking of symmetry. Our Theorem \ref{T:lerman} describes, in a
sense, an opposite phenomenon. More precisely, a fat vector for the
canonical invariant connection in a bundle $H\to G\to G/H$ is an
element $X\in \mathfrak h=\mathfrak g$ avoiding certain Weyl
chambers. It provides a symplectic structure on the associated bundle
$H/V \to G/V = G\times_H H/V \to G/H$.  When $X$ approaches a
forbidden Weyl chamber this symplectic structure degenerates. This is
due to the fact that the isotropy subgroup $V\subset G$ becomes bigger
and it is no longer a subgroup of $H$.
\end{remark}

\subsection{Acknowledgements}
The second author would like to thank Eugene Lerman for answering his
questions. He is also indebted to IHES and the Max-Planck-Institute in
Bonn for hospitality during the work on this paper. The second author
is partly supported by the Ministry of Science and Higher Education,
grant no. 1P03A 03330.

\section{Coupling forms and fat vectors}\label{S:weinstein}

Let $\Mo$ be a closed symplectic $2n$-manifold and let $\Mo
\stackrel{i}\to E \stackrel{\pi}\to B$ be a Hamiltonian bundle, that
is, a bundle with structure group acting on $M$ by Hamiltonian
diffeomorphisms.  A closed two form $\Omega\in \Omega^2(E)$ is called
a {\bf {\em coupling form}} if
\begin{enumerate}
\item $i^*\Omega = \omega$ and
\item $\pi_!\Omega^{n+1}=0$, where $\dim M = 2n.$
\end{enumerate}

We construct the coupling form associated to a connection following
Guillemin-Lerman-Sternberg \cite{MR98d:58074}. Let $G\to P\to B$ be the associated
principal bundle with a connection $\theta \in \Omega^1(P,\mathfrak g)$
with Hamiltonian reduced holonomy. Let $\Theta \in \Omega^2(P,\mathfrak g)$
be the curvature two--form. The connection defines a horizontal distribution
$\mathcal H \subset TE.$  Let $\Psi:M\to \mathfrak g^*$ be the moment map
for the Hamiltonian action $G\to \Ham\Mo.$
The coupling form $\Omega$ is defined so that
the horizontal distribution $\mathcal H$ is $\Omega$-orthogonal to the
vertical distribution $\ker d\pi$ and

$$
\Omega_{[p,x]}(X,Y)=
\begin{cases}
\omega_x(X,Y) & \text{ if } X,Y \in \ker d\pi\\
\langle \Psi(x),\Theta_p(X^*,Y^*)\rangle  & \text{ if } X,Y \in \mathcal H
\end{cases}
$$
The vectors $X,Y\in T_{[p,x]}E$ are the images of $X^*,Y^*\in T_pP$
under the map $x:P \to E$ defined by a point $x\in M$ by the formula
$$
x(p):=[x,p]\in E = P\times_{G} M.
$$  
The closeness of this form is proved on page 9 in \cite{MR98d:58074} or on
page 225 in McDuff-Salamon \cite{MR97b:58062}.

Conversely, given a coupling form $\Omega\in \Omega^2(E)$  the
corresponding connection is defined by the following horizontal
distribution
$$
\mathcal H :=
\{X\in TE \,|\, \Omega(X,V)=0\,\,\text{ for all } V\in \ker d\pi\,\}.
$$
The curvature two-form on the associated principal bundle
$$
\Ham\Mo \to P \to B
$$
is given by
$$
\Theta_p(X^*,Y^*) := \Omega_{[p,-]}(X,Y)\in C^{\infty}(M),
$$ 
for any horizontal vectors $X^*,Y^*\in T_pP$.  Recall that the Lie
algebra of the group of $\Ham\Mo$ Hamiltonian diffeomorphisms is
identified with the space $C^{\infty}(M)$ of smooth functions of the
zero mean with respect to the volume form defined by the symplectic
structure.

The evaluation at a point defines an embedding $M \to C^{\infty}(M)^*$
which is the moment map for the action of $\Ham\Mo$ on $\Mo$. A
point $x\in M$ is called {\bf {\em fat}} with respect to the Hamiltonian
connection $\theta$ if the curvature two-form $\Theta $ evaluated at
$x$ is nondegenerate on the horizontal distribution.  That is
if the two-form
$$
(X^*,Y^*)\mapsto \Omega_{[p,x]}(X,Y)
$$
is nondegenerate on $\mathcal H \subset TP.$

It is now clear that the nondegeneracy of the coupling form is
equivalent to the existence of a fat point. Due to the equivariance of
the moment map, if there exist one fat point then every point in $M$
is fat as $M$ is a coadjoint orbit of $\Ham\Mo$. This proves the first
part of the following.

\begin{theorem}\label{T:weinstein}
Let $(M,\omega ) \to E \stackrel{\pi}\to B$ be a Hamiltonian bundle.
It admits a connection $\mathcal H$ with a nondegenerate
coupling form if and only if the associated connection
$\theta$  in the principal bundle
$$
\operatorname{Ham}(M,\omega)\to P \to B
$$
admits a fat point 
$x\in M\subset C^{\infty}(M)^*=\mathfrak{ham}(M,\omega)^*$.

If $H\subset \Ham\Mo$ is the connected component of the
holonomy group of the
above connection then every element $u\in \mathfrak h^*$ in
the image of the moment map $\mu:M\to \mathfrak h^*$
is fat.\qed

\end{theorem}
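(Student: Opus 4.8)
The plan is to prove the second assertion, reducing the ``holonomy group'' statement to the already-established equivalence between nondegeneracy of the coupling form and the existence of a fat point. Let me think carefully about what is being claimed. We have a Hamiltonian bundle with structure group $\Ham(M,\omega)$, and a connection $\mathcal H$ whose holonomy group has connected component $H \subset \Ham(M,\omega)$. The moment map for the $\Ham(M,\omega)$-action on $M$ is the embedding $M \to C^\infty(M)^* = \mathfrak{ham}(M,\omega)^*$; composing with the restriction $\mathfrak{ham}(M,\omega)^* \to \mathfrak h^*$ gives $\mu: M \to \mathfrak h^*$. The claim is that if the coupling form is nondegenerate (equivalently, a fat point exists), then every $u$ in the image of $\mu$ is fat with respect to the canonical connection in $H \to P' \to B$, where $P'$ is a reduction of the structure group to $H$.

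So the key structural fact is the reduction of structure group. By the holonomy theorem (Ambrose–Singer), a connection on a principal $\Ham(M,\omega)$-bundle with connected holonomy group $H$ reduces to a connection on a principal $H$-bundle $P' \subset P$, and the curvature $\Theta$ takes values in $\mathfrak h$ (it already does, by Ambrose–Singer). Restricting the $\mathfrak{ham}(M,\omega)^*$-valued pairing to $\mathfrak h$ and using that the moment map $\mu$ is the composite $M \hookrightarrow \mathfrak{ham}(M,\omega)^* \twoheadrightarrow \mathfrak h^*$, we get that for horizontal $X^*, Y^*$,
$$
\langle \mu(x), \Theta_p(X^*,Y^*)\rangle_{\mathfrak h} = \langle \Psi(x), \Theta_p(X^*,Y^*)\rangle_{\mathfrak{ham}},
$$
since $\Theta_p(X^*,Y^*) \in \mathfrak h$. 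But the right-hand side is exactly $\Omega_{[p,x]}(X,Y)$ by the defining formula of the coupling form. Hence the two-form $(X^*,Y^*) \mapsto \langle \mu(x), \Theta_p(X^*,Y^*)\rangle$ on $\mathcal H$ coincides with the coupling form restricted to horizontal directions, which is nondegenerate by hypothesis. Therefore every $u = \mu(x)$ is fat, as required.

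Carrying this out, the steps are: (1) invoke the holonomy reduction theorem to obtain the $H$-subbundle $P'$ and the induced connection, noting the curvature already has values in $\mathfrak h$; (2) identify $\mu$ as the composite of the tautological moment map $\Psi$ with the dual of $\mathfrak h \hookrightarrow \mathfrak{ham}(M,\omega)$; (3) observe that pairing $\mu(x)$ with a curvature value equals pairing $\Psi(x)$ with the same value, because the value lies in $\mathfrak h$; (4) apply the defining formula for the coupling form to identify this with $\Omega$ on horizontal vectors; (5) conclude nondegeneracy from the first part of the theorem. I would also remark that, conversely, any element of the coadjoint $H$-orbit of a fat vector is fat, which is consistent with the general remark in the introduction about fat vectors coming in coadjoint orbits.

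The main obstacle I anticipate is not in the algebra but in the care needed with the holonomy reduction: one must check that the reduced bundle $P'$ is well-defined as a smooth principal $H$-bundle (the holonomy group need not be closed in $\Ham(M,\omega)$ in the infinite-dimensional setting, and $H$ is only its connected component), and that the coupling form for the reduced data agrees with the original $\Omega$. In the setting of this paper—where $\Ham(M,\omega)$ is an infinite-dimensional Lie group—one sidesteps the subtleties by working formally with the reduced holonomy Lie algebra $\mathfrak h$ rather than with a literal subbundle, since everything we need is the statement about two-forms on the horizontal distribution and the compatibility of the pairings. Once that is granted, the proof is essentially the observation in the two displayed equations above, so the only real content is the bookkeeping with moment maps and the identification of $\mathfrak h$ with the reduced holonomy algebra.
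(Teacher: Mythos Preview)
Your proposal is correct and follows essentially the same route as the paper. The paper isolates the key observation into a separate lemma---that for a reduction $H\to Q\to B$ of $G\to P\to B$ with the induced connection, a vector $u\in\mathfrak h^*$ is fat if and only if every extension $v\in\mathfrak g^*$ restricting to $u$ is fat---whereas you carry out the same computation inline by noting that $\Theta$ takes values in $\mathfrak h$ and hence $\langle\mu(x),\Theta\rangle_{\mathfrak h}=\langle\Psi(x),\Theta\rangle_{\mathfrak{ham}}=\Omega|_{\mathcal H}$; the paper also glosses over the infinite-dimensional holonomy-reduction subtleties you flag.
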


The second statement is implied by the following lemma whose
proof is straightforward.

\begin{lemma}
Let $H \to Q \to B$ be a reduction of a principal bundle
$G\to P \to B$ equipped with the induced connection.
A vector $u\in \mathfrak h^*$ is fat if and only if
every $v\in \mathfrak g^*$ equal to $u$ when restricted
to $\mathfrak h$ is fat.\qed
\end{lemma}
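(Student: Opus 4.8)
The plan is to unwind the definition of fatness in both the reduced bundle $H\to Q\to B$ and the ambient bundle $G\to P\to B$, and to compare the two curvature forms along $Q$. Since $Q\to B$ is a reduction of $P\to B$, the total space $Q$ sits inside $P$ and the connection on $P$ restricts to the given connection on $Q$; correspondingly the curvature $\Theta^G\in\Omega^2(P,\mathfrak g)$ restricts along $Q$ to $\Theta^H\in\Omega^2(Q,\mathfrak h)$, viewing $\mathfrak h\subset\mathfrak g$. The horizontal distribution of the $Q$-connection at a point $q\in Q$ is exactly the horizontal distribution of the $P$-connection at $q$ (both are the $\theta$-kernel in $T_qP$, which happens to lie in $T_qQ$). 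So for horizontal $X^*,Y^*$ at $q$ we have the identity $\Theta^G_q(X^*,Y^*)=\Theta^H_q(X^*,Y^*)\in\mathfrak h\subset\mathfrak g$.

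First I would fix notation: let $r:\mathfrak g^*\to\mathfrak h^*$ denote the restriction map dual to the inclusion $\mathfrak h\hookrightarrow\mathfrak g$, so that a covector $v\in\mathfrak g^*$ restricts to $u:=r(v)\in\mathfrak h^*$ precisely when $\langle v,\xi\rangle=\langle u,\xi\rangle$ for all $\xi\in\mathfrak h$. Then for $q\in Q$ and horizontal $X^*,Y^*\in T_qQ$,
$$
\langle v,\Theta^G_q(X^*,Y^*)\rangle=\langle v,\Theta^H_q(X^*,Y^*)\rangle=\langle u,\Theta^H_q(X^*,Y^*)\rangle,
$$
the last equality because $\Theta^H_q(X^*,Y^*)\in\mathfrak h$ and $v$ agrees with $u$ on $\mathfrak h$. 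This shows the two-form $(X^*,Y^*)\mapsto\langle v,\Theta^G(X^*,Y^*)\rangle$ on the horizontal distribution of $P$ along $Q$ coincides with the two-form $(X^*,Y^*)\mapsto\langle u,\Theta^H(X^*,Y^*)\rangle$ on the horizontal distribution of $Q$.

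From here the equivalence is almost immediate. If $u$ is fat, then for each such $v$ the displayed two-form is nondegenerate on the horizontal distribution of $P$ at every point of $Q$; by $G$-equivariance of the curvature (conjugating by $g\in G$ moves fatness of $v$ to fatness of $\operatorname{Ad}^*_g v$, and translates a point of $P$ to an arbitrary point in its fibre), nondegeneracy at the points of $Q$ propagates to all of $P$, so $v$ is fat. Conversely, if every $v\in\mathfrak g^*$ restricting to $u$ is fat, pick any one such $v$; its curvature two-form is nondegenerate on horizontal vectors in particular at points $q\in Q$, and there it equals the $\Theta^H$-two-form associated to $u$, so $u$ is fat. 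The one point requiring a line of care is the $G$-equivariance step in the forward direction — making sure that fatness of $v$ need only be checked at points of $Q$, which is exactly where the hypothesis on $u$ gives information — but this is standard, since fatness is by construction a condition invariant under the $G$-action and $Q$ meets every fibre of $P\to B$.
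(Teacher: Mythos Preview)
The paper gives no proof of this lemma beyond the \qed\ and the remark that it is ``straightforward,'' so there is nothing substantive to compare against. Your argument for the direction ``some (hence every) $v$ with $v|_{\mathfrak h}=u$ fat in $P$ $\Rightarrow$ $u$ fat in $Q$'' is correct and is exactly what is used to deduce the second part of Theorem~\ref{T:weinstein}.

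Your forward direction, however, has a genuine gap, and the gap cannot be repaired because the implication ``$u$ fat in $Q$ $\Rightarrow$ every extension $v$ is fat in $P$'' is \emph{false} in general. The error is in the sentence you yourself flag: equivariance gives
\[
\bigl\langle v,\Theta^G_{qg}(R_{g*}X,R_{g*}Y)\bigr\rangle
=\bigl\langle \operatorname{Ad}^*_g v,\Theta^G_q(X,Y)\bigr\rangle,
\]
so fatness of $v$ at $qg$ is equivalent to fatness of $\operatorname{Ad}^*_g v$ at $q$, \emph{not} of $v$ at $q$. Thus knowing that $\langle v,\Theta^G\rangle$ is nondegenerate along $Q$ does not propagate to all of $P$; what you would need is that $r(\operatorname{Ad}^*_g v)\in\mathfrak h^*$ is fat for $Q$ for \emph{every} $g\in G$, and the hypothesis only controls $g\in H$. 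Concretely, take $H=U(1)\subset G=SU(2)$ and $Q=SU(2)\to S^2$ the Hopf bundle with its standard connection, extended to $P=Q\times_H G$. Any nonzero $u\in\mathfrak h^*\cong\mathbb R$ is fat for $Q$. But for any $v\in\mathfrak{su}(2)^*$, the coadjoint orbit of $v$ is a sphere through the origin's antipodal points... more precisely a sphere centered at $0$, and it necessarily meets the annihilator of $\mathfrak h$; at such $g$ one has $r(\operatorname{Ad}^*_g v)=0$, so $\langle v,\Theta^G_{qg}\rangle$ vanishes identically. Hence \emph{no} $v$ is fat for $P$, while $u$ is fat for $Q$.

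In short: your converse is fine and is all that the paper actually needs; the forward half of the biconditional, as literally stated, does not hold, and your equivariance argument does not establish it.
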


\begin{example}\label{E:su(2)}
Consider the bundle 
$\mathbb{CP}^1 \to \mathbb{CP}^3 \to \mathbb{HP}^1.$
For the standard symplectic form $\omega_3$ on $\mathbb{CP}^3$ and any
positive real number $r>0$ the form $\Omega_r:=r\cdot \omega_3$ is a
coupling form for the scaled standard symplectic form $r\cdot
\omega_1$ on $\mathbb{CP}^1$. This coupling form induces a connection
with holonomy group equal to $PSU(2)$.  Since $\Omega_r$ is
nondegenerate for all $r>0$, we get that every nonzero vector in
$\mathfrak{psu}(2)^*$ is fat with respect to the induced connection on
the principal bundle
$$
PSU(2) \to P \to \mathbb{HP}^1=S^4.
$$
We used the fact that the manifolds $(\mathbb{CP}^1,r\cdot \omega_1)$
are all nonzero coadjoint orbits of $PSU(2)$ (cf. the remark at the
bottom of page 224 of Weinstein \cite{MR82a:53038}).
\end{example}

\section{Fatness of the canonical connection in a principal bundle 
$H\to G\to G/H$}\label{S:lerman}
In the paper \cite{MR952456}, Lerman investigated the fatness
of the canonical invariant connection in a principal bundle
$$
H\rightarrow G\rightarrow G/H,
$$
where
$G$ is compact and semisimple, and $G/H$ is a coadjoint
orbit. We shall show in this section that his proof essentially
works in a more general situation. Namely, that $G$
is a semisimple Lie group and $H\subset G$ is a subgroup of maximal
rank such that the Killing form for $G$ is nondegenerate on the Lie
algebra $\mathfrak h \subset \mathfrak g$ of $H$.

Let us start with several known facts from Lie theory and introduce
notation. We denote by $\mathfrak{g}$ the Lie algebra of a Lie group
$G$. The symbol $\mathfrak{g}^c$ denotes the complexification.  Let
$\mathfrak{t}$ be a maximal abelian subalgebra in $\mathfrak{h}$. Then
$\mathfrak{t}^c$ is a Cartan subalgebra in $\mathfrak{g}^c$.  We denote by
$\Delta=\Delta(\mathfrak{g}^c,\mathfrak{t}^c)$ the root system of $\mathfrak{g}^c$
with respect to $\mathfrak{t}^c$. Under these choices
the root system for $\mathfrak{h}^c$ is a subsystem of $\Delta$. Denote
this subsystem as $\Delta(\mathfrak{h})$.

If the Killing form $B$ is nondegenerate on $\mathfrak h$ then the
subspace 
$$
\mathfrak m:= 
\{X\in \mathfrak g\,|\, B(X,Y)=0,\, \text{for all } Y\in \mathfrak h\,\}
$$
defines a decomposition
$$
\mathfrak{g}=\mathfrak{h}\oplus\mathfrak{m}. 
$$
The decomposition is $\operatorname{ad}_H$-invariant and the
restriction of the Killing form to $\mathfrak m$ is nondegenerate
(see Theorem 3.5 in Section X of \cite{MR97c:53001b}).
The decomposition complexifies to
$\mathfrak{g}^c=\mathfrak{h}^c\oplus\mathfrak{m}^c$. 
Thus, we have root decompositions:

\begin{eqnarray*}
\mathfrak{g}^c
&=&\mathfrak{t}^c+\sum_{\alpha\in\Delta}\mathfrak{g}^{\alpha},\\
\mathfrak{h}^c
&=&\mathfrak{t}^c+\sum_{\alpha\in\Delta(\mathfrak{h})}\mathfrak{g}^{\alpha},\\
\mathfrak{m}^c 
&=&\sum_{\alpha\in\Delta\setminus\Delta(\mathfrak{h})}\mathfrak{g}^{\alpha}.
\end{eqnarray*}
Since $G$ is semisimple, the Killing form $B$ defines an isomorphism
$\mathfrak g \cong \mathfrak g^*$ between the Lie algebra of $G$ and
its dual. If the Killing form is nondegenerate on $\mathfrak h$,
the composition
$$
\mathfrak h \hookrightarrow \mathfrak g \stackrel{\cong}\longrightarrow 
\mathfrak g^* \to \mathfrak h^*
$$
is an $Ad_H$-equivariant isomorphism. Let us denote this isomorphism by 
$u\mapsto X_u$. Let $C\subset\mathfrak{t}$ be the Weyl chamber
and let $C_{\alpha}$ denote its wall determined by the root $\alpha$.

\begin{theorem}\label{T:lerman} 
Let $G$ be a semisimple Lie group, and $H\subset G$ a compact subgroup
of maximal rank. Suppose that the Killing form $B$ of $G$ is
nondegenerate on the Lie algebra $\mathfrak h\subset \mathfrak g$ of
the subgroup $H.$ The following conditions are equivalent
\begin{enumerate}
\item
A vector $u\in\mathfrak{h}^*$ is fat with respect to the
the canonical invariant connection in the principal bundle
$$
H\rightarrow G\rightarrow G/H.
$$ 
\item
The vector $X_u$ does not belong to the set
$$
Ad_H(\cup_{\alpha\in\Delta\setminus\Delta(\mathfrak{h})}C_{\alpha}.)
$$
\item
The isotropy subgroup $V\subset H$ of $u\in \mathfrak h^*$ with respect to the coadjoint
action is the centralizer of a torus in $G$.
\end{enumerate}
\end{theorem}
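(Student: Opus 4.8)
The plan is to compute the curvature of the canonical invariant connection explicitly and then translate the nondegeneracy condition into a root-theoretic one. Recall that for the bundle $H\to G\to G/H$ the canonical invariant connection is the $\mathfrak h$-valued one-form given by projecting the Maurer--Cartan form along the reductive splitting $\mathfrak g=\mathfrak h\oplus\mathfrak m$; its curvature at the identity, evaluated on horizontal vectors $X,Y\in\mathfrak m$, is $\Theta(X,Y)=-[X,Y]_{\mathfrak h}$, the $\mathfrak h$-component of the bracket. Hence a vector $u\in\mathfrak h^*$ is fat exactly when the bilinear form $(X,Y)\mapsto\langle u,[X,Y]_{\mathfrak h}\rangle$ on $\mathfrak m$ is nondegenerate. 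Using the Killing form isomorphism $u\mapsto X_u$ and its $\operatorname{Ad}_H$-equivariance, $\langle u,[X,Y]_{\mathfrak h}\rangle=B(X_u,[X,Y])=B([X_u,X],Y)$, so the form is nondegenerate iff $\operatorname{ad}(X_u)\colon\mathfrak m\to\mathfrak m$ is invertible; note $\operatorname{ad}(X_u)$ preserves $\mathfrak m$ because $X_u\in\mathfrak h$ and the splitting is $\operatorname{ad}_{\mathfrak h}$-invariant, and the pairing it induces via $B$ is skew since $B$ is symmetric.

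Next I would diagonalize. Since $H$ has maximal rank, after conjugating by an element of $H$ we may assume $X_u\in\mathfrak t$ (this is the source of the $\operatorname{Ad}_H$ in statement (2)). Complexify: on $\mathfrak m^c=\sum_{\alpha\in\Delta\setminus\Delta(\mathfrak h)}\mathfrak g^\alpha$ the operator $\operatorname{ad}(X_u)$ acts on the root space $\mathfrak g^\alpha$ as multiplication by $\alpha(X_u)$. Therefore $\operatorname{ad}(X_u)|_{\mathfrak m}$ is invertible if and only if $\alpha(X_u)\neq0$ for every $\alpha\in\Delta\setminus\Delta(\mathfrak h)$. Now $\alpha(X_u)=0$ for $X_u\in\mathfrak t$ is precisely the defining equation of the wall $C_\alpha$ (here one uses that, up to the Weyl group of $H$ which is realized by $\operatorname{Ad}_H$, vanishing of $\alpha$ on $\mathfrak t$ and on the closure $\overline C$ agree), so $\operatorname{ad}(X_u)|_{\mathfrak m}$ is singular iff $X_u\in\bigcup_{\alpha\in\Delta\setminus\Delta(\mathfrak h)}C_\alpha$ up to $\operatorname{Ad}_H$. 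Undoing the conjugation gives the equivalence $(1)\Longleftrightarrow(2)$.

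For $(2)\Longleftrightarrow(3)$, I would identify the isotropy subalgebra $\mathfrak v$ of $u$ under the coadjoint action of $H$: transported by $B$, it is the centralizer $\mathfrak z_{\mathfrak h}(X_u)$. Since $X_u\in\mathfrak h$ and $\mathfrak h$ has maximal rank, the centralizer $\mathfrak z_{\mathfrak g}(X_u)$ in the full algebra is a reductive subalgebra containing $\mathfrak t^c$, namely $\mathfrak t^c+\sum_{\alpha(X_u)=0}\mathfrak g^\alpha$, and $X_u$ is a regular element of this centralizer, so $\mathfrak z_{\mathfrak g}(X_u)$ is the centralizer of the torus generated by (the closure of the one-parameter subgroup of) $X_u$. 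The condition that $V$ be the centralizer of a torus \emph{in $G$} — equivalently $\mathfrak z_{\mathfrak h}(X_u)=\mathfrak z_{\mathfrak g}(X_u)$, i.e.\ $V$ does not "grow" outside $H$ — holds exactly when no root $\alpha\in\Delta\setminus\Delta(\mathfrak h)$ vanishes on $X_u$, which is condition (2). I expect the main obstacle to be the bookkeeping in $(2)\Leftrightarrow(3)$: one must argue carefully that the centralizer of $X_u$ in $\mathfrak g$ is always the centralizer of \emph{some} torus (true for any element of a maximal-rank reductive subalgebra, via its closure) and that membership in $H$ of this torus is equivalent to the root condition; the differential-geometric part, namely the curvature formula and its diagonalization, is the standard computation and should go through verbatim from Lerman's compact case once one checks that nondegeneracy of $B$ on $\mathfrak h$ is all that was really used.
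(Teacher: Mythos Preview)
Your argument for $(1)\Leftrightarrow(2)$ is exactly the paper's: compute the curvature of the canonical connection as $-\tfrac12[X,Y]_{\mathfrak h}$, rewrite $B(X_u,[X,Y]_{\mathfrak h})=B(X_u,[X,Y])=B([X_u,X],Y)$ using $B(X_u,\mathfrak m)=0$, and then diagonalize $\operatorname{ad}(X_u)$ on $\mathfrak m^c$ via the root decomposition. The paper phrases nondegeneracy as $(\ker\operatorname{ad}_{X_u})\cap\mathfrak m=\{0\}$ rather than invertibility of $\operatorname{ad}(X_u)|_{\mathfrak m}$, but these are equivalent. Your $(2)\Rightarrow(3)$ likewise matches the paper's $(1)\Rightarrow(3)$: fatness forces $\mathfrak z_{\mathfrak g}(X_u)\subset\mathfrak h$, hence $\mathfrak v=\mathfrak z_{\mathfrak h}(X_u)=\mathfrak z_{\mathfrak g}(X_u)$, which is the Lie algebra of the centralizer of the torus $\overline{\{\exp(tX_u)\}}$.

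Where you diverge is $(3)\Rightarrow(2)$. You assert the equivalence ``$V$ is the centralizer of a torus in $G$ $\iff$ $\mathfrak z_{\mathfrak h}(X_u)=\mathfrak z_{\mathfrak g}(X_u)$'', but the forward implication is not justified: a priori $V$ could be the centralizer of a torus $S$ unrelated to the one generated by $X_u$. For instance, when $H=T$ is a maximal torus one has $V=T=Z_G(T)$ for \emph{every} $u$, so your right-hand side would have to hold for all $X_u\in\mathfrak t$, yet $\mathfrak z_{\mathfrak g}(X_u)\supsetneq\mathfrak t$ whenever $X_u$ lies on a wall. Your paraphrase ``$V$ does not grow outside $H$'' is really a statement about $X_u$, not about $V$ alone, and that is precisely the gap.

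The paper does \emph{not} argue $(3)\Rightarrow(1)$ via centralizers. Instead it starts from a generator $X$ of the torus $S$ (which lies in $\mathfrak h$ since $S\subset V\subset H$), takes $u=B(X,-)$, and argues geometrically: the $G$-invariant Kirillov--Kostant--Souriau form on the orbit $G/V$ restricts to the fibrewise symplectic form on the associated bundle $H/V\to G/V\to G/H$, and $G$-invariance forces the underlying connection to be the canonical one, so this $u$ is fat. If you want to keep your purely algebraic route you must explain why, under hypothesis $(3)$, the torus $S$ may be taken to be the closure of $\exp(\mathbb R X_u)$ for the \emph{given} $u$; otherwise follow the paper's geometric argument.
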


\begin{proof}
{\em The equivalence of the first and the second condition:}
The curvature form of the canonical connection in the given
principal bundle has the form

$$
\Omega(X^*,Y^*)=-\frac{1}{2}[X,Y]_{\mathfrak{h}}, \quad X,Y\in\mathfrak{m}
$$ 
\cite[Theorem 11.1]{MR97c:53001a}.
Hence the fatness condition is expressed as the non-degeneracy of the
form

$$
(X,Y)\rightarrow B(X_u,[X,Y]_{\mathfrak{h}}). \quad (*)
$$
Recall that here the pairing is given by the Killing form. 
Since $X_u\in\mathfrak{h}$, $B(X_u,\mathfrak{m})=0$ and we get

$$
B(X_u, [X,Y]_{\mathfrak{h}})=B(X_u,[X,Y])=B([X_u,X],Y)
$$
It follows from the hypothesis that $B$ is non-degenerate on
$\mathfrak m$ and the form (*) is nondegenerate if and only if
$[X_u,X]\neq 0.$ This is equivalent to

$$
(\ker\,\operatorname{ad}_{X_u})\cap\mathfrak{m}=\{0\}.
$$
Without loss of generality we can assume that $X_u \in \mathfrak t.$
Then the last equality is, after complexification, equivalent to
the condition that 
$$
\alpha(X_u) \neq 0
$$
for all roots 
$\alpha \in \Delta \backslash \Delta(\mathfrak h)$ 
(see the root decomposition of $\mathfrak m^c$) which means
that $X_u$ does not belong to a wall $C_{\alpha}$ 
for $\alpha \in \Delta \backslash \Delta(\mathfrak h)$. 
The general case (that is  $X_u$ is not necessarily in $\mathfrak t$)
follows since $\mathfrak{h}=\cup_{h\in H}\operatorname{Ad}_h(\mathfrak{t})$. 

\bigskip
\noindent
{\em The equivalence of the first and the third condition:}
Let $u\in \mathfrak h^*$ be fat. Then its isotropy subgroup $V\subset H$ 
is connected and has the Lie algebra   
$\mathfrak v =\{X\in \mathfrak h\,|\,[X,X_u]=0\,\}.$
Since the fatness of $u$ implies 
$$(\ker \operatorname{ad}_{X_u})\cap \mathfrak m =\{0\}$$ 
(see the previous part), we get
$\mathfrak v =\{X\in \mathfrak g\,|\,[X,X_u]=0\,\}$ which means that $V$ is
the centralizer of the torus $S:= \overline {\{\exp(tX_u)\}}\subset G.$

\bigskip
Now, suppose that the isotropy subgroup $V\subset H\subset G$ is the centralizer
of a torus $S\subset G$. Let $X_u \in \mathfrak g$ be a generator of this torus.
That is $S$ is the closure of the one-parameter subgroup defined by $X_u$.
Clearly, $V$ is the coadjoint isotropy subgroup of $u:= B(X_u,-)$.
The associated symplectic form is the fibrewise symplectic
structure on the associated bundle $H/V \to G/V \to G/H.$ 
Since it is $G$-invariant, the associated connection is induced by 
the canonical invariant connection in the principal bundle $H\to G\to G/H.$
\end{proof}

\begin{cor}
Let $K$ be a compact semisimple group and let $H\subset K$
be a closed subgroup. The canonical invariant connection in the bundle
$$
H\rightarrow K\rightarrow K/H
$$ 
admits fat vectors if and only if
$\operatorname{rank}\,K=\operatorname{rank}\,H$.  
\end{cor}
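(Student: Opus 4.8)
The plan is to obtain this as a direct consequence of Theorem~\ref{T:lerman}, after checking that its standing hypotheses are automatic in the compact setting. A closed subgroup $H$ of a compact group $K$ is compact, and the Killing form $B$ of a compact semisimple group is negative definite, so its restriction to $\mathfrak h$ is nondegenerate; moreover the condition $\operatorname{rank} K=\operatorname{rank} H$ is exactly the assertion that $H$ has maximal rank, equivalently that $\mathfrak h$ contains a Cartan subalgebra of $\mathfrak k$. Granting equal ranks, the ``if'' direction then only requires exhibiting one fat vector: I would take a maximal abelian subalgebra $\mathfrak t\subset\mathfrak h$, which is now a Cartan subalgebra of $\mathfrak k$, and pick a regular element $X_u\in\mathfrak t$, i.e.\ one with $\alpha(X_u)\neq 0$ for every $\alpha\in\Delta$. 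Such elements are dense in $\mathfrak t$, and in particular $\alpha(X_u)\neq 0$ for all $\alpha\in\Delta\setminus\Delta(\mathfrak h)$; by the criterion established in the proof of Theorem~\ref{T:lerman} (equivalently, condition~(2) of that theorem), the covector $u:=B(X_u,-)|_{\mathfrak h}$ is then fat.

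For the converse I cannot invoke Theorem~\ref{T:lerman} as a black box, since it presupposes maximal rank; instead I would reuse the portion of its proof that does not use that hypothesis. The reductive decomposition $\mathfrak k=\mathfrak h\oplus\mathfrak m$ and the curvature formula for the canonical invariant connection need only nondegeneracy of $B|_{\mathfrak h}$, and the ensuing computation shows that $u\in\mathfrak h^*$ is fat if and only if $(\ker\operatorname{ad}_{X_u})\cap\mathfrak m=\{0\}$, where $X_u\in\mathfrak h$ is the $B$-dual of $u$. Assuming a fat vector exists, fix such an $X_u$; if $X_u=0$ then $\ker\operatorname{ad}_{X_u}=\mathfrak k\supseteq\mathfrak m$ forces $\mathfrak m=\{0\}$, i.e.\ $H=K$, and there is nothing to prove, so assume $X_u\neq 0$. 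Since $X_u\in\mathfrak h$, the operator $\operatorname{ad}_{X_u}$ preserves both summands of the $\operatorname{ad}_H$-invariant decomposition $\mathfrak k=\mathfrak h\oplus\mathfrak m$, so its kernel splits as $(\ker\operatorname{ad}_{X_u}\cap\mathfrak h)\oplus(\ker\operatorname{ad}_{X_u}\cap\mathfrak m)$; fatness kills the second summand, giving $\mathfrak z_{\mathfrak k}(X_u)=\ker\operatorname{ad}_{X_u}\subseteq\mathfrak h$.

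The final step is where compactness of $K$ is used essentially: every element of $\mathfrak k$ lies in the Lie algebra $\mathfrak t_K$ of some maximal torus $T_K\subset K$, and $\mathfrak t_K$, being abelian, satisfies $\mathfrak t_K\subseteq\mathfrak z_{\mathfrak k}(X_u)\subseteq\mathfrak h$. Thus $\mathfrak h$ contains a Cartan subalgebra of $\mathfrak k$, so $\operatorname{rank} H\geq\dim\mathfrak t_K=\operatorname{rank} K$, and since $\operatorname{rank} H\leq\operatorname{rank} K$ always (a maximal torus of $H$ sits inside one of $K$), the ranks coincide. I expect the only delicate point to be bookkeeping in the converse: one must extract the criterion ``$(\ker\operatorname{ad}_{X_u})\cap\mathfrak m=\{0\}$'' from the proof of Theorem~\ref{T:lerman} in a way that is manifestly independent of the maximal rank assumption (it is, since that assumption only enters later, to reinterpret the walls $C_\alpha$), and to dispose cleanly of the degenerate case $H=K$.
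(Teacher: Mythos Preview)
Your argument is correct. The ``if'' direction matches the paper's: both invoke Theorem~\ref{T:lerman} and pick a regular element of $\mathfrak t$. The ``only if'' direction, however, follows a genuinely different route. The paper perturbs a fat vector $u$ so that its $H$-isotropy is a maximal torus $T\subset H$, observes that the associated bundle $H/T\to K/T\to K/H$ then carries a nondegenerate coupling form, and concludes ``for cohomological reasons'' that $T$ must be a maximal torus of $K$ (the point being that $K/T$ now carries an invariant symplectic structure, forcing $T$ to have full rank in $K$). You instead extract from the proof of Theorem~\ref{T:lerman} the purely Lie-algebraic criterion $(\ker\operatorname{ad}_{X_u})\cap\mathfrak m=\{0\}$, deduce $\mathfrak z_{\mathfrak k}(X_u)\subset\mathfrak h$, and then note that any maximal torus of $K$ through $X_u$ lies in this centralizer.

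Your approach is more elementary and entirely self-contained, avoiding the somewhat elliptical appeal to cohomology. The paper's approach, on the other hand, buys something you do not get: because it only uses that the associated bundle is symplectically fat (via Sternberg--Weinstein), and never the specific curvature formula of the canonical connection, it shows that \emph{no} connection on $H\to K\to K/H$ can admit fat vectors when $\operatorname{rank} H<\operatorname{rank} K$. This is exactly the content of the Remark following the corollary, and your argument, tied as it is to the canonical curvature $-\tfrac12[X,Y]_{\mathfrak h}$, does not yield it.
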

\begin{proof} 
According to Theorem \ref{T:lerman}, if
$\operatorname{rank}\,K=\operatorname{rank}\,H$ then there exist fat
vectors (for example, those which lie in the interior of the Weyl
chamber). 

Let $u\in \mathfrak h^*$ be a fat vector. By perturbing $u$ slightly,
if necessary, we may assume that its isotropy subgroup is
a maximal torus $T\subset H$. The associated bundle
$$
H/T \to K \times_H H/T =K/T \to K/H
$$
admits a fibrewise symplectic structure. For cohomological reasons
the rank the torus $T$ has to be of maximal rank in $K$. This
implies that the ranks of $K$ and $H$ are equal.
\end{proof} 

\begin{remark} 
We see that the above argument shows that if
$\operatorname{rank}\,K>\operatorname{rank}\,H$, there is no chance
for fatness even for other connections.
\end{remark}

\section{Fat vectors for orthonormal frame bundles}\label{S:examples}

\subsection{Twistor bundles over spaces of constant nonzero curvature}
\label{SS:twistor}

A twistor bundle over an even dimensional Riemannian manifold $(M,g)$
is the bundle of complex structures in the tangent spaces $T_pM.$ More
precisely, it is a bundle associated with the orthonormal frame bundle
to $M$ with fibre $SO(2n)/U(n).$ It generalizes a construction
of Penrose in dimension four \cite{MR867684,MR0469146}

Reznikov proved that twistor bundles over manifolds with suitably
pinched curvature admits fibrewise symplectic forms \cite{MR1225431}.
The following proposition proves his result for manifolds of constant
non-zero curvature.

\begin{proposition}\label{P:twistor}
Let $G$ be either $SO(2n+1)$ or $SO(2n,1)$ and let
$SO(2n) \subset G$ be the obvious inclusion. Let
$J\in \mathfrak{so}(2n)$ be a matrix with
the blocks
$$
\left(
\begin{array}{rr}
0 & -1\\
1 & 0
\end{array}
\right)
$$
on the diagonal and zeros elsewhere.
Then the vector $u:=B(J,-)\in \mathfrak{so}(2n)^*$ 
is fat with respect to the canonical connection in the bundle
$$
SO(2n)\to G \to G/SO(2n).
$$
\end{proposition}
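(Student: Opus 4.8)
\section*{Proof proposal}

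The plan is to deduce the statement directly from Theorem \ref{T:lerman}, applied to the principal bundle $SO(2n)\to G\to G/SO(2n)$, so the first task is to verify its hypotheses. The subgroup $H=SO(2n)$ is compact and of maximal rank in $G$: a maximal abelian subalgebra $\mathfrak t\subset\mathfrak{so}(2n)$ has dimension $n$, and in both cases $\mathfrak g^c\cong\mathfrak{so}(2n+1,\mathbb C)$ (the complexification of any real form of type $B_n$ being the same), which has rank $n$, so $\mathfrak t^c$ is a Cartan subalgebra of $\mathfrak g^c$. The Killing form $B$ of $\mathfrak g$ is nondegenerate on $\mathfrak h=\mathfrak{so}(2n)$: when $G=SO(2n+1)$ this is immediate, $B$ being negative definite on all of $\mathfrak g$; when $G=SO(2n,1)$ it holds because $\mathfrak{so}(2n)$ is the maximal compact subalgebra of $\mathfrak{so}(2n,1)$, and the Killing form of a semisimple Lie algebra is negative definite on the Lie algebra of a maximal compact subgroup. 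Thus Theorem \ref{T:lerman} applies.

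Since $J\in\mathfrak h$, the $Ad_H$-equivariant isomorphism $\mathfrak h\to\mathfrak h^*$, $X\mapsto B(X,-)|_{\mathfrak h}$, sends $J$ to $u$, so $X_u=J$. Moreover $J$ already lies in the standard maximal abelian subalgebra $\mathfrak t\subset\mathfrak{so}(2n)$ of block-diagonal matrices with $2\times 2$ blocks $\left(\begin{array}{rr}0&-\theta_i\\ \theta_i&0\end{array}\right)$, and in these coordinates it is the point $\theta_1=\dots=\theta_n=1$. In the usual coordinates $e_1,\dots,e_n$ on $\mathfrak t^c$, the root system of $\mathfrak g^c$ (type $B_n$) is $\Delta=\{\pm e_i\pm e_j:i<j\}\cup\{\pm e_1,\dots,\pm e_n\}$, while the root subsystem of $\mathfrak h^c=\mathfrak{so}(2n)^c$ (type $D_n$) consists of the long roots $\Delta(\mathfrak h)=\{\pm e_i\pm e_j:i<j\}$; hence $\Delta\setminus\Delta(\mathfrak h)=\{\pm e_1,\dots,\pm e_n\}$. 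By the computation in the proof of Theorem \ref{T:lerman}, fatness of $u$ is equivalent to $(\ker\operatorname{ad}_J)\cap\mathfrak m=\{0\}$, which (as $J\in\mathfrak t$) amounts to $e_i(J)\neq 0$ for $i=1,\dots,n$. Diagonalizing the blocks of $J$ over $\mathbb C$ shows each $e_i(J)$ is a nonzero purely imaginary scalar; equivalently, the point $\theta_1=\dots=\theta_n=1$ lies on no wall $C_{\pm e_i}$. Therefore $u=B(J,-)$ is fat. As a consistency check, this matches condition (3) of Theorem \ref{T:lerman}: the coadjoint isotropy group of $u$ in $SO(2n)$ is $U(n)$, which is the centralizer in $G$ of the circle generated by $\exp(tJ)$.

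The computation is short once Theorem \ref{T:lerman} is in hand; the points requiring a little care are the identification $\mathfrak{so}(2n,1)^c\cong\mathfrak{so}(2n+1)^c$ — so that a single $B_n$ root calculation handles the noncompact form — and the nondegeneracy of $B$ on $\mathfrak{so}(2n)\subset\mathfrak{so}(2n,1)$. I expect the only mildly delicate bookkeeping to be the normalization relating the real coordinates $\theta_i$ on $\mathfrak t$ to the complex root functionals $e_i$; but since $J$ corresponds to all of them being equal to $1$, no short root comes anywhere near vanishing on $J$, so the conclusion is robust and the argument works for every $n$.
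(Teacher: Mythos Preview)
Your proof is correct and follows essentially the same route as the paper: both apply Theorem~\ref{T:lerman}, identify the root system of $\mathfrak g^c$ as type $B_n$ and that of $\mathfrak{so}(2n)^c$ as type $D_n$, observe that the forbidden roots are the short roots $\pm e_i$, and conclude by noting that $J=(1,\dots,1)$ lies on none of these walls. Your version is in fact more careful than the paper's, as you explicitly verify the hypotheses of Theorem~\ref{T:lerman} (maximal rank, nondegeneracy of $B$ on $\mathfrak h$ in the noncompact case) and add the consistency check with condition~(3), whereas the paper leaves these implicit.
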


\begin{proof}
Choose a maximal torus so that its Lie algebra $\mathfrak t$ 
consist of matrices with $2\times2$-blocks of the form
$$
\left (
\begin{array}{rr}
0 & -t_i\\
t_i & 0
\end{array}
\right )
$$
on the diagonal, where $t_i\in \mathbb R$ and $i=1,\ldots,n.$
The roots for $G$ are given by
$$
t_i - t_j \text{ for } i \neq j, \quad
\pm (t_i + t_j) \text{ for } i<j  \quad \text{ and }
\pm t_i
$$
where $i,j=1,\ldots n$.  The forbidden walls are defined by the roots
$\pm t_i$.  Since $J=(1,1,\ldots,1)$ in the coordinates $t_i$, it
belongs to no forbidden wall and according to Theorem \ref{T:lerman}
the corresponding vector $u\in \mathfrak{so}(2n)^*$ is fat.
\end{proof}

\begin{cor}[Reznikov]\label{C:twistor+}
The twistor bundle over even dimensional sphere
$$
SO(2n)/U(n) \to SO(2n+1)/U(n) \to S^{2n}
$$
is symplectically fat, i.e. it admits a fibrewise symplectic
structure.\qed
\end{cor}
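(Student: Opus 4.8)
The plan is to deduce Corollary~\ref{C:twistor+} directly from Proposition~\ref{P:twistor} together with the Sternberg--Weinstein theorem. Specialize Proposition~\ref{P:twistor} to the case $G=SO(2n+1)$, so that the canonical invariant connection in the principal bundle $SO(2n)\to SO(2n+1)\to SO(2n+1)/SO(2n)=S^{2n}$ admits the fat vector $u=B(J,-)\in\mathfrak{so}(2n)^*$, where $J$ is the block-diagonal matrix described there. The point is now to identify the twistor fibre $SO(2n)/U(n)$ as a Hamiltonian $SO(2n)$-space whose moment map image is precisely the coadjoint orbit of $u$.

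The key step is the standard identification of $SO(2n)/U(n)$ with the (co)adjoint orbit of $u=B(J,-)$ under $SO(2n)$. The stabilizer of the complex structure $J$ under the conjugation action of $SO(2n)$ on the space of orthogonal complex structures is exactly $U(n)$; dually, since $B$ is $\operatorname{Ad}$-invariant, the coadjoint stabilizer of $u$ coincides with the adjoint stabilizer of $J$, namely $U(n)$. Hence the coadjoint orbit $\mathcal{O}_u\cong SO(2n)/U(n)$ carries its canonical Kirillov--Kostant--Souriau symplectic form, the $SO(2n)$-action on it is Hamiltonian, and the inclusion $\mathcal{O}_u\hookrightarrow\mathfrak{so}(2n)^*$ \emph{is} the moment map. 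Thus the twistor bundle $SO(2n)/U(n)\to SO(2n+1)\times_{SO(2n)}SO(2n)/U(n)\to S^{2n}$ is precisely the associated Hamiltonian bundle of Proposition~\ref{P:twistor}, and one checks that its total space is $SO(2n+1)/U(n)$, since $SO(2n+1)\times_{SO(2n)}\bigl(SO(2n)/U(n)\bigr)\cong SO(2n+1)/U(n)$.

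It remains to invoke fatness of the whole orbit. By Proposition~\ref{P:twistor} the single vector $u$ is fat for the canonical connection, and by the remark in Section~\ref{SS:vectors} (or by the second half of Theorem~\ref{T:weinstein}) fatness of one vector propagates to its entire coadjoint orbit; hence every vector in $\Psi(SO(2n)/U(n))=\mathcal{O}_u$ is fat. The Sternberg--Weinstein theorem then gives that the coupling form on $SO(2n+1)/U(n)$ is symplectic, so in particular the twistor bundle admits a fibrewise symplectic structure, which is the assertion.

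The only genuine obstacle is the orbit identification in the middle paragraph: one must be careful that the adjoint (not merely coadjoint) orbit of $J$ really is the full twistor fibre $SO(2n)/U(n)$ and that the $B$-induced isomorphism $\mathfrak{so}(2n)\cong\mathfrak{so}(2n)^*$ carries this orbit to $\mathcal{O}_u$ with the KKS form matching the fibrewise symplectic form used in the coupling construction. Once that dictionary is in place, everything else is formal from the results already proved.
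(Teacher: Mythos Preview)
Your proposal is correct and takes essentially the same approach the paper implicitly uses: the paper marks this corollary with \qed because, once Proposition~\ref{P:twistor} gives a fat vector $u$ with stabilizer $U(n)$, the identification of $SO(2n)/U(n)$ with the coadjoint orbit $\mathcal{O}_u$ and the Sternberg--Weinstein theorem (together with the observation in Section~\ref{SS:vectors} that fatness propagates along a coadjoint orbit) immediately yield the fibrewise symplectic structure on the associated bundle $SO(2n+1)/U(n)\to S^{2n}$. You have simply written out what the paper leaves as a one-line consequence.
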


\begin{cor}[Reznikov]\label{C:twistor-}
Let $\Gamma \subset SO(2n,1)$ be a lattice trivially intersecting
$SO(2n).$ The twistor bundle over an even dimensional hyperbolic manifold
$$
SO(2n)/U(n) \to \Gamma\backslash SO(2n,1)/U(n) \to \Gamma\backslash SO(2n,1)/SO(2n)
$$
is symplectically fat, i.e. it admits a fibrewise symplectic
structure.
\end{cor}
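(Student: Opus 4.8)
The plan is to deduce this from Proposition \ref{P:twistor} by passing to the quotient by the lattice $\Gamma$, exactly as Corollary \ref{C:twistor+} is deduced in the compact case. First I would recall the setup from Proposition \ref{P:twistor}: with $G = SO(2n,1)$ and $SO(2n)\subset G$ the obvious inclusion, the vector $u = B(J,-)\in\mathfrak{so}(2n)^*$ is fat with respect to the canonical invariant connection $\theta$ in the principal bundle $SO(2n)\to G\to G/SO(2n)$. Since $u$ lies in the image of the moment map for the Hamiltonian $SO(2n)$-action on the coadjoint orbit through $u$ — which is precisely $SO(2n)/U(n)$, since the stabilizer of $J$ under the adjoint action is $U(n)$ — the Sternberg--Weinstein theorem gives a nondegenerate coupling form, i.e.\ a fibrewise symplectic structure, on the total space of the associated bundle
$$
SO(2n)/U(n) \to G\times_{SO(2n)} SO(2n)/U(n) = G/U(n) \to G/SO(2n).
$$

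Next I would observe that all the relevant structures — the principal bundle $SO(2n)\to G\to G/SO(2n)$, the canonical invariant connection $\theta$, its curvature, the associated bundle $G/U(n)\to G/SO(2n)$, and the coupling form $\Omega$ — are $G$-equivariant for the left $G$-action. In particular they descend along any quotient by a discrete subgroup of $G$ acting freely. The hypothesis that $\Gamma\subset SO(2n,1)$ is a lattice intersecting $SO(2n)$ trivially ensures that $\Gamma$ acts freely (and properly discontinuously, being discrete in $G$) on $G/SO(2n)$ and on $G/U(n)$, so that $\Gamma\backslash(G/SO(2n))$ and $\Gamma\backslash(G/U(n))$ are smooth manifolds and the bundle descends to
$$
SO(2n)/U(n) \to \Gamma\backslash SO(2n,1)/U(n) \to \Gamma\backslash SO(2n,1)/SO(2n).
$$
The left-invariant coupling form $\Omega$ on $G/U(n)$, being $\Gamma$-invariant, descends to a closed two-form on $\Gamma\backslash(G/U(n))$ that restricts to the same (nondegenerate) symplectic form on each fibre $SO(2n)/U(n)$. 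Hence the quotient bundle is symplectically fat.

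The main point to be careful about — and the only real content beyond Proposition \ref{P:twistor} — is the freeness of the $\Gamma$-action, which is exactly what the condition $\Gamma\cap SO(2n) = \{1\}$ buys us: a nontrivial stabilizer of a point $gU(n)\in G/U(n)$ would force a nontrivial element of $\Gamma\cap gU(n)g^{-1}$, and since $U(n)\subset SO(2n)$ this would give a nontrivial element of $\Gamma\cap g SO(2n) g^{-1}$; the same consideration on $G/SO(2n)$ reduces to $\Gamma\cap SO(2n)=\{1\}$ after using that $SO(2n)$ is its own normalizer-up-to-conjugacy issue — more precisely one uses that if $\gamma\in\Gamma$ fixes $gSO(2n)$ then $g^{-1}\gamma g\in SO(2n)$, but conjugacy does not immediately return us to $\Gamma\cap SO(2n)$, so the cleanest argument is to note that $G/SO(2n)$ is the hyperbolic space $\mathbb{H}^{2n}$ on which $G$ acts by isometries with $SO(2n)$ the stabilizer of a point, and a lattice $\Gamma$ with $\Gamma\cap SO(2n)=\{1\}$ is torsion-free (a nontrivial finite-order isometry of $\mathbb{H}^{2n}$ has a fixed point, whose stabilizer is conjugate to $SO(2n)$), hence acts freely on $\mathbb{H}^{2n}=G/SO(2n)$ and a fortiori on $G/U(n)$. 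With freeness in hand the descent of the bundle and of the fibrewise symplectic form is routine, so this step is where I would spend the argument.
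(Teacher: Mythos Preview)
Your approach is essentially the paper's own: Proposition~\ref{P:twistor} yields a $G$-invariant fibrewise symplectic structure on the associated bundle $SO(2n)/U(n)\to G/U(n)\to G/SO(2n)$, and invariance lets it descend through the $\Gamma$-quotient. The paper's proof is a two-sentence version of exactly this and does not discuss freeness at all.

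One caution about your added discussion: the claim that $\Gamma\cap SO(2n)=\{1\}$ forces $\Gamma$ to be torsion-free is not correct as stated. A finite-order element of $\Gamma$ fixes some point of $\mathbb{H}^{2n}$ and hence lies in a \emph{conjugate} $gSO(2n)g^{-1}$, but nothing brings it back into $SO(2n)$ itself; you yourself flag this conjugacy issue a few lines earlier and then silently drop it. (Concretely, a Fuchsian triangle group in $SO(2,1)$ intersects the stabilizer of a generic point trivially yet has plenty of torsion.) The hypothesis in the corollary is really meant to ensure the quotient is a manifold, so it should be read as---or strengthened to---``$\Gamma$ torsion-free''; with that reading your descent argument goes through cleanly, and the rest of your proof is fine.
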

\begin{proof}
It follows from Proposition \ref{P:twistor} that the associated
bundle 
$$
SO(2n)/U(n) \to SO(2n,1)/U(n) \to SO(2n,1)/SO(2n)
$$
admits an invariant fibrewise symplectic structure. Hence it
descends to a fibrewise symplectic structure after taking the
quotient by the lattice $\Gamma$.
\end{proof}

\begin{remark}
Since the orbit $SO(2n)/U(n)$ is the minimal in the hierarchy
of coadjoint orbits in the sense of \cite[page 21]{MR98d:58074}, it
follows that there are more fat vectors defining topologically
distinct coadjoint orbits.
\end{remark}

\subsection{$SO(2n)$-bundles over manifolds of pinched curvature}\label{S:pinched}
The sectional curvature $K_g$ of a Riemannian manifold $(M,g)$ is
called {\bf {\em $\varepsilon$-pinched}} if it
satisfies the following inequality
$$
1-\varepsilon \leq \left |K_g \right |\leq 1.
$$

\begin{theorem} \label{T:pinched}
The orthonormal bundle $SO(2n)\rightarrow P\rightarrow M$ 
over Riemannian manifold $M$ with $\frac{3}{2n+1}$-pinched curvature
admits fat vectors $f\in\mathfrak{so}(2n)^*$. 
\end{theorem}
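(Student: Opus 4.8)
The plan is to exhibit a concrete fat vector and verify fatness directly from the Riemann curvature. Take $f:=B(J,-)\in\mathfrak{so}(2n)^*$, with $J$ the block complex structure of Proposition \ref{P:twistor}. First I would recall the classical identification of the curvature two-form $\Theta$ of the Levi-Civita connection on the orthonormal frame bundle: at a point $p=(x,\phi)$, where $\phi\colon\mathbb R^{2n}\to T_xM$ is an oriented orthonormal frame, $\Theta_p(v,w)\in\mathfrak{so}(2n)$ is the Riemann curvature endomorphism $R_x(v,w)$ expressed in the frame $\phi$, for horizontal $v,w$ identified with tangent vectors to $M$. Since the Killing form of $\mathfrak{so}(2n)$ is a positive multiple of $\operatorname{tr}(XY)$, the vector $f$ is fat at $p$ exactly when the two-form
$$
\rho(v,w):=\operatorname{tr}\bigl(J\,R_x(v,w)\bigr)=-2\sum_{a=1}^{n}R_x(v,w,e_{2a-1},e_{2a})
$$
is nondegenerate on $\mathbb R^{2n}$; the second equality uses $Je_{2a-1}=e_{2a}$, $Je_{2a}=-e_{2a-1}$ and the skew-symmetry of $R$ in its last two arguments. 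This reduces the theorem to a statement about an arbitrary algebraic curvature tensor whose sectional curvature lies in $[1-\varepsilon,1]$: the pinching is a geometric, frame-independent condition, and for a connected $M$ of dimension at least three with nowhere-vanishing sectional curvature the sign of the curvature is constant over the (connected) Grassmann bundle of two-planes, so the negatively curved case follows by replacing $R$ with $-R$.

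Next I would compare $R$ with the appropriate constant-curvature model. Set $\kappa:=1-\tfrac\varepsilon2$ and write $R=\kappa R_0+E$, where $R_0(X,Y,Z,W)=\langle X,Z\rangle\langle Y,W\rangle-\langle Y,Z\rangle\langle X,W\rangle$ is the unit-curvature tensor; the pinching hypothesis then says precisely that the sectional curvature of the algebraic curvature tensor $E$ lies in $[-\tfrac\varepsilon2,\tfrac\varepsilon2]$. A short computation gives $\operatorname{tr}\bigl(J\,R_0(v,w)\bigr)=-2\langle Jv,w\rangle$, whence $\rho(v,w)=-2\kappa\langle Jv,w\rangle+\operatorname{tr}\bigl(J\,E(v,w)\bigr)$. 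For a unit vector $v$ the model term contributes exactly $-2\kappa$ to $\rho(v,Jv)$, so it remains only to bound the error $\operatorname{tr}\bigl(J\,E(v,Jv)\bigr)$ below $2\kappa=2-\varepsilon$ in absolute value.

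The crucial step --- and the only place where the precise constant enters --- is exactly this bound. To conclude that $\rho$ is nondegenerate it suffices to produce, for each unit vector $v$, some $w$ with $\rho(v,w)\ne0$, and I would take $w=Jv$. Since $U(n)\subset SO(2n)$ commutes with $J$ and acts transitively on the unit sphere, I may choose a $J$-compatible orthonormal frame with $v=e_1$, so $Jv=e_2$; using $R_0(e_1,e_2,e_{2a-1},e_{2a})=0$ for $a\ge2$ one obtains
$$
\rho(e_1,e_2)=-2\Bigl(\kappa+K_E(\operatorname{span}\{e_1,e_2\})+\sum_{a=2}^{n}E(e_1,e_2,e_{2a-1},e_{2a})\Bigr).
$$
Berger's inequality, applied to the tensor $E$ whose sectional curvature has oscillation at most $\varepsilon$, bounds the sectional term $K_E(\operatorname{span}\{e_1,e_2\})$ by $\tfrac\varepsilon2$ and each of the $n-1$ orthonormal four-frame terms by $\tfrac23\varepsilon$; it is essential here that $v=e_1$, for then the planes $\operatorname{span}\{e_{2a-1},e_{2a}\}$ with $a\ge2$ are genuinely orthogonal to $\operatorname{span}\{e_1,e_2\}$ and the four-vector estimate applies. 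Hence the total correction has absolute value at most $\tfrac\varepsilon2+(n-1)\tfrac{2\varepsilon}3=\tfrac{(4n-1)\varepsilon}6$, which is strictly less than $\kappa=1-\tfrac\varepsilon2$ precisely when $(4n+2)\varepsilon<6$, that is when $\varepsilon<\tfrac{3}{2n+1}$. Under this hypothesis $\rho(e_1,e_2)\ne0$, so $\rho$ has trivial kernel and is nondegenerate, and therefore $f=B(J,-)$ is fat.

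I expect the main obstacle to lie entirely in the estimate of the third paragraph: putting Berger's inequality in the right normalized form --- bounding a four-orthonormal component of $E$ by $\tfrac23$ times the oscillation $\max K_E-\min K_E=\varepsilon$, rather than applying anything to $R$ directly --- and checking carefully that the unitary normalization $v=e_1$ is exactly what makes the remaining complex lines orthogonal to $\operatorname{span}\{v,Jv\}$, without which the four-vector bound is not available and the argument degrades. I would also record that $\tfrac{3}{2n+1}$ is merely the constant this argument produces, not a sharp pinching threshold, and that verifying fatness at every point of the principal bundle, and the case of negatively curved $M$, are disposed of respectively by the $U(n)$-reduction and by the substitution $R\mapsto-R$ noted above.
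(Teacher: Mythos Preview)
Your proposal is correct and follows essentially the same route as the paper: pick $f=B(J,-)$, test nondegeneracy of $\rho(v,w)=\operatorname{tr}(J\,R(v,w))$ by evaluating at $w=Jv$, split the trace into the sectional-curvature term for $\operatorname{span}\{v,Jv\}$ plus $n-1$ mixed terms, and bound the latter by Berger's $\tfrac{2}{3}\varepsilon$ inequality to obtain the constant $\tfrac{3}{2n+1}$. Your decomposition $R=\kappa R_0+E$ is an equivalent repackaging (since $R_0$ vanishes on the mixed four-frames and Berger depends only on the oscillation), and your explicit $U(n)$-reduction to $v=e_1$ and your handling of negative curvature via $R\mapsto -R$ make precise two points the paper leaves implicit or to a remark.
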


\begin{proof}
Let 
$J\in \mathfrak{so}(2n)$ be a matrix with
the blocks
$$
\begin{pmatrix}
0 & -1\\
1 & 0
\end{pmatrix}
$$
on the diagonal and zeros elsewhere.
The corresponding element $f\in\mathfrak{so}(2n)^*$ in the dual space
is defined by $\left<f,A\right> := \operatorname{Tr}(A\cdot J)$. 
We shall show that $f\in\mathfrak{so}(2n)^*$ is a fat vector. 

Let $g$ be a Riemannian metric with small enough pinching constant and
let $\Theta \in \Omega^2(P,\mathfrak{so}(2n))$ denote the
corresponding curvature two-form. Recall that the curvature tensor
$R(X,Y)$ for vector fields $X,Y$ on $M$ is related to the curvature
form of the Riemannian connection by the formula
$$
R(X,Y)Z=u(\Theta(X^*,Y^*)u^{-1}(Z)),\,\text{for} X,Y,Z\in T_{\pi(u)}M
$$
where $\pi: P\rightarrow M$ is a bundle projection, and the point
$u\in P$ is an orthogonal frame $u:\mathbb{R}^n\to T_{\pi(u)}M$.  
We need to show that
$$
(X^*,Y^*)\mapsto \langle \Theta(X^*,Y^*),f\rangle
$$
is a non-degenerate 2-form on the horizontal distribution. For any
$X^*,Y^*\in \mathcal H_u$ which are the horizontal lifts of
vectors $X,Y\in T_{\pi(u)}M$ we make the following computation. 

\begin{eqnarray*}
\left<\Theta_u(X^*,Y^*),f\right>&=& \operatorname{Tr}(\Theta_u(X^*,Y^*)\cdot J)\\
&=& \operatorname{Tr}(u\cdot \Theta_u(X^*,Y^*)\cdot J\cdot u^{-1})\\
&=& \operatorname{Tr}(R_{\pi(u)}(X,Y)\cdot u \cdot J\cdot u^{-1}).\\
\end{eqnarray*}
Let $J_u := u\cdot J \cdot u^{-1}$ denote the complex structure on
$T_{\pi(u)}M$ corresponding to the frame $u.$ It follows from the
above calculation that the vector $f$ is fat if and only if the
two-from 
$$
(X^*,Y^*)\mapsto \operatorname{Tr}(R(X,Y)\cdot J_u)
$$ 
is nondegenerate.

Let $X_1, J_uX_1,...,X_n,J_uX_n$ 
be an orthonormal basis of the horizontal subspace
$\mathcal H_u \cong T_{\pi(u)}M$. Then the inequality
$$
\operatorname{Tr}(R(X_i, J_uX_i)J_u)\neq 0
$$
for all $i=1,...,n$ implies that the desired nondegeneracy. 
This trace is calculated in a usual way, taking into
consideration the orthogonality of vectors and the fact that 
$J_u^2=-\operatorname{id}$.  We have
  
\begin{eqnarray*}
|\operatorname{Tr}(R(X_i,J_uX_i)\cdot J_u)|
&=& \left|\sum_{j=1}^ng(R(X_i,J_uX_i)J_uX_j,X_j)\right |\\
&\geq& (1-\varepsilon)-\left|\sum_{j\not=i}g(R(X_i,J_u X_i)J_uX_j,X_j)\right|\\
&\geq& (1-\varepsilon)-\sum_{j\not=i}\left|g(R(X_i,J_uX_i)J_uX_j,X_j)\right|\\
&\geq& (1-\varepsilon)-(n-1) \sum_{j\not=i}\frac{2}{3}\varepsilon\\
&\geq& (1-\varepsilon)-(n-1)\frac{2}{3}\varepsilon\\
&=& 1-\frac{2n+1}{ 3}\varepsilon.
\end{eqnarray*}
In the calculation, we used the assumption of pinched sectional
curvature:
$$
|K(X_i,J_uX_i)|=|g(R(X_i,J_uX_i)J_uX_i,X_i)|\geq 1-\varepsilon,
$$
Berger's inequality 
$$
|g(R(X_i,X_j)X_k,X_l)|\leq \frac{2}{3}\varepsilon,
$$
(\cite{MR0133781}, inequality $(7)$, p. 69)
and the skew symmetric property of the curvature tensor (in the last
two arguments). Finally, taking the pinching constant,
$$
\varepsilon < \frac{3}{2n+1}
$$
we get that the vector $f$ is fat.
\end{proof}

\begin{remark}\label{R:berger}
Notice that Berger proves the above mentioned inequality for
positively curved pinched manifolds. However, his calculation
goes through almost verbatim in the negative pinching case.
\end{remark}

\begin{cor}\label{C:pinched}
Let $\xi \in \mathfrak{so}(2n)^*$ be a covector in a small
neighbourhood of $f$ and let $M_{\xi}$ denote its coadjoint orbit.
Then the associated bundle 
$M_{\xi}\to P\times _{SO(2n)}M_{\xi}\to M$ over a manifold of 
sufficiently pinched curvature admits a fibrewise symplectic form.\qed
\end{cor}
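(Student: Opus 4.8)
The plan is to combine two observations: fatness is an open and $\operatorname{Ad}^{*}$-invariant condition, and for a coadjoint orbit the Sternberg--Weinstein theorem promotes a single fat vector to a fibrewise symplectic form on the whole associated bundle. So the work is to check that vectors near $f$ stay fat (uniformly over the frame bundle), and then quote Theorem~\ref{T:weinstein}.

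\emph{Step 1: a neighbourhood of $f$ consists of fat vectors.} In the proof of Theorem~\ref{T:pinched} the fatness of $f$ is witnessed by the bound $|\operatorname{Tr}(R_{\pi(u)}(X_{i},J_{u}X_{i})J_{u})|\ge 1-\tfrac{2n+1}{3}\varepsilon$ in a $J_{u}$-adapted orthonormal basis of $\mathcal H_{u}$, which for a fixed $\varepsilon<\tfrac{3}{2n+1}$ stays bounded below by some $\delta>0$ uniformly in the frame $u\in P$. Since $|K_{g}|\le 1$, the full curvature tensor is uniformly bounded, say $|R_{\pi(u)}|\le c(n)$. Writing $\xi=f+\eta$ and letting $A_{\eta}\in\mathfrak{so}(2n)$ represent $\eta$, the same computation gives $\langle\Theta_{u}(X^{*},Y^{*}),\xi\rangle=\operatorname{Tr}\!\bigl(R_{\pi(u)}(X,Y)\,u(J+A_{\eta})u^{-1}\bigr)$, so the bilinear form attached to $\xi$ differs from the one attached to $f$ by a form of operator norm at most $c'(n)\|\eta\|$, uniformly in $u$. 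A bilinear form lying within less than the least singular value of a nondegenerate one is again nondegenerate, and the $f$-form is dominated by its $n$ conjugate-pair entries exactly as in the proof of Theorem~\ref{T:pinched}, so its least singular value is bounded below by a function of $\delta$ and $n$. Hence there are a neighbourhood $U$ of $f$ in $\mathfrak{so}(2n)^{*}$ and $\varepsilon_{0}>0$ such that whenever $M$ carries an $\varepsilon$-pinched metric with $\varepsilon<\varepsilon_{0}$, every $\xi\in U$ is fat for the canonical connection in $SO(2n)\to P\to M$. (Alternatively one simply reruns the estimate of Theorem~\ref{T:pinched} verbatim with $J$ replaced by $J+A_{\eta}$ and a smaller pinching constant.)

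\emph{Steps 2--3: the orbit and Sternberg--Weinstein.} The coadjoint orbit $M_{\xi}\subset\mathfrak{so}(2n)^{*}$ carries its canonical symplectic structure, and the $SO(2n)$-action on it is Hamiltonian with moment map the inclusion $M_{\xi}\hookrightarrow\mathfrak{so}(2n)^{*}$. By Step~1 the vector $\xi$ is fat, and fatness is invariant under the coadjoint action (Subsection~\ref{SS:vectors}), so every point of the orbit $M_{\xi}$ is fat. Since all vectors in the image of the moment map are then fat, the Sternberg--Weinstein theorem — equivalently Theorem~\ref{T:weinstein} — shows that the coupling form $\Omega$ on $P\times_{SO(2n)}M_{\xi}$ associated with the canonical connection is symplectic; as $i^{*}\Omega$ is the symplectic form of $M_{\xi}$ on each fibre, $\Omega$ is the asserted fibrewise symplectic form, and the bundle is in fact symplectically fat.

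The formal part is everything after Step~1; the one real issue is the uniformity asserted there, which is needed because $P$ is not assumed compact (e.g.\ for complete pinched manifolds). This is precisely where the normalisation $|K_{g}|\le 1$ is used: it bounds the full curvature tensor, and hence the perturbation term $\operatorname{Tr}(R\cdot uA_{\eta}u^{-1})$, uniformly over $P$, so that nondegeneracy of the $f$-form persists under small changes of $f$.
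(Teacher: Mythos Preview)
Your argument is correct and matches the approach the paper has in mind: the corollary is stated with a bare \qed, so the intended proof is exactly the one you give --- fatness is an open $\operatorname{Ad}^*$-invariant condition, hence a small neighbourhood of $f$ (and therefore the entire coadjoint orbit $M_\xi$) consists of fat vectors, and the Sternberg--Weinstein theorem then yields the fibrewise symplectic form. Your extra care in Step~1 about the uniformity in $u\in P$ (using $|K_g|\le 1$ to bound the curvature and hence the perturbation term) is a genuine point the paper does not make explicit and is needed when $M$ is noncompact; the quantitative reading of the estimate in Theorem~\ref{T:pinched} --- that for every unit $X$ one has $|\langle\Theta_u(X^*,(J_uX)^*),f\rangle|\ge\delta$ --- is precisely what makes the perturbation argument go through.
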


\begin{remark}
A Riemannian manifold with positive pinched curvature is known
to be homeomorphic to a sphere. However, it is not known if 
exotic spheres admit metrics of positive curvature
(see Berger \cite[Section 12.2]{MR2002701} for a survey).

In the negative curvature the situation is completely different.
There are examples of negatively curved 
closed manifolds with arbitrarily small pinching constants.
One source of examples is due to Farrell and Jones \cite{MR1002632}.
They prove that a connected sum of a hyperbolic manifold $M$
with an exotic sphere admits a pinched negative (non-constant)
curvature and it is not diffeomorphic to $M$. Another 
construction is due to Gromov and Thurston \cite{MR892185}.
They construct negatively curved pinched metrics on
certain branched coverings of hyperbolic manifolds.
\end{remark}

\begin{example}
According to the above remark and Theorem \ref{T:pinched} we get a
rich family of symplectic structures on twistor bundles over
negatively curved manifolds. Let $M$ and $M\#\Sigma$ be homeomorphic
but not diffeomorphic manifolds equipped with metrics with pinched
negative curvature as constructed by Farrell and Jones. Are the total
spaces of the associated twistor bundles diffeomorphic? And if so then
are they symplectomorphic?
\end{example}

\section{More examples}\label{S:more_examples}
\subsection{Other fat bundles over non-symplectic bases}\label{SS:quaternionic}

The twistor bundle over a sphere is an interesting example because
the base of the bundle is not symplectic. The following proposition
provides more examples if this kind. Its proof is straightforward.

\begin{proposition}\label{P:nonsymp}
Let $K$ be a compact semisimple Lie group and let $H\subset K$
be its maximal rank subgroup with finite fundamental group.
Then the bundle
$H \to K \to K/H$ admits fat vectors with respect to the 
canonical connection and the base $K/H$ is not symplectic.\qed
\end{proposition}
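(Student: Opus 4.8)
The plan is to verify the two assertions of Proposition \ref{P:nonsymp} separately: existence of fat vectors, and non-symplecticity of $K/H$. For the first, I would simply invoke the Corollary following Theorem \ref{T:lerman}: since $\operatorname{rank} K = \operatorname{rank} H$ by hypothesis, the canonical invariant connection in $H\to K\to K/H$ admits fat vectors — for instance any $u\in\mathfrak h^*$ with $X_u$ in the interior of the Weyl chamber $C\subset\mathfrak t$, which meets none of the walls $C_\alpha$. One point to check is that the hypotheses of Theorem \ref{T:lerman} are met: $K$ compact semisimple has nondegenerate (negative definite) Killing form, so its restriction to $\mathfrak h$ is automatically nondegenerate, and $H$ is compact of maximal rank. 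So this half is immediate.

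For the second assertion, the point is that $H^2(K/H;\mathbb R)=0$ when $H$ has finite fundamental group and $\operatorname{rank} H = \operatorname{rank} K$, which obstructs any symplectic (indeed any almost symplectic requiring a degree-two class with nonvanishing top power) form since $K/H$ is a closed manifold of positive dimension. The cleanest route is the Borel picture: for $\operatorname{rank} H = \operatorname{rank} K$ the rational cohomology $H^*(K/H;\mathbb Q)$ is concentrated in even degrees and is a free module over $H^*(BT;\mathbb Q)^{W_K}$ modulo $H^*(BT;\mathbb Q)^{W_H}$ — more to the point, $\dim_{\mathbb Q} H^*(K/H;\mathbb Q) = |W_K|/|W_H|$ and $H^2(K/H;\mathbb Q)$ is identified with the image of $\mathfrak t^*{}^{W_H}/\mathfrak t^*{}^{W_K}$. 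Since $K$ is semisimple, $\mathfrak t^*{}^{W_K}=0$, so $H^2(K/H;\mathbb Q)\cong (\mathfrak t^*)^{W_H}$, and the latter vanishes precisely when $H$ is semisimple, i.e. when $\pi_1(H)$ is finite. Thus $H^2(K/H;\mathbb R)=0$, and a closed positive-dimensional manifold with vanishing second real cohomology carries no symplectic form, because $[\omega]^{\dim/2}$ would be a nonzero class in $H^{\dim}$ forcing $[\omega]\ne 0$.

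I would present this as two short paragraphs: first citing the Corollary to Theorem \ref{T:lerman} for fatness, then the Borel-type computation $H^2(K/H;\mathbb R)\cong H^0(K/H)\otimes((\mathfrak t^\ast)^{W_H})=0$ (using semisimplicity of $H$) to rule out symplecticity. The main — and really the only nontrivial — obstacle is making the cohomological statement precise: one must be careful that $H$ need not be connected, so "finite fundamental group" should be read as "$H^0$ semisimple", and one should note that $K/H$ and $K/H^0$ differ by a finite cover, under which vanishing of $H^2(-;\mathbb R)$ is preserved (transfer argument), so there is no loss in assuming $H$ connected. With that caveat the argument is entirely standard, which is why the paper calls the proof straightforward.
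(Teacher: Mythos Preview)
Your proof is correct. The paper provides no argument at all---it simply declares the proposition ``straightforward'' and appends a \qed---so there is nothing to compare against beyond that judgment, and your two paragraphs supply exactly the routine verification the authors had in mind: the Corollary to Theorem~\ref{T:lerman} for the existence of fat vectors, and $H^2(K/H;\mathbb R)=0$ for the non-symplecticity.

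One remark on the second half: your Borel computation is fine (with the connectedness caveat you already flag), but there is a shorter route that avoids it entirely. From the homotopy exact sequence of $H\to K\to K/H$ and the standard fact $\pi_2(K)=0$, one gets $\pi_2(K/H)\hookrightarrow\pi_1(H)$, hence $\pi_2(K/H)$ is finite; likewise $\pi_1(K/H)$ is finite since $\pi_1(K)$ and $\pi_0(H)$ are. Passing to the finite cover with trivial $\pi_1$ and applying Hurewicz gives $H_2=\pi_2\otimes\mathbb Q=0$, and a transfer argument brings this back down. This is arguably what ``straightforward'' was pointing at, and it sidesteps the need to identify $H^2(K/H)$ with $(\mathfrak t^*)^{W_H}$.
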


\subsection{Non-homogeneous examples}\label{SS:nonhomo}

Let $T\to G\to G/T$ be a principal bundle with the canonical invariant
connection $\theta$, where the torus $T$ is a subgroup of maximal
rank.  Let $(M,\omega)$ be a closed symplectic manifold endowed with a
Hamiltonian torus action $T\to \operatorname{Ham}(M,\omega)$ with the
moment map $\Psi:M\to \mathfrak t^*.$ Since the torus $T$ is an
abelian group we can add a constant to the moment map and a resulting
mapping will be equivariant. That is, we take $\Psi + a$, where 
$a\in\mathfrak t^*$ 
and define a two--form to be equal to $\omega $ on vertical vectors and
on the horizontal distribution to be given by
$$
\Omega_{a}[p,x](X,Y):=\langle \Psi(x)+a, \Theta_p(X^*.Y^*) \rangle.
$$
By Theorem \ref{T:lerman}, all vectors away the walls of the Weyl
chambers are fat.  Choosing an element $a\in \mathfrak t^*$ such that
the image of $\Psi + a$ does not intersect the walls of the Weyl
chambers we obtain a fibrewise symplectic form on the associated
bundle
$$
M\to G\times _T M \to G/T.
$$
If $G$ is non-compact we can  divide by a
suitable lattice to obtain fat bundles over locally homogeneous
spaces $\Gamma \backslash G/T$.

This construction is an application of Thurston's theorem
because choosing an element $a\in \mathfrak t^*$ is equivalent to
pulling back the symplectic form from the base $G/T$ representing
the class $a\in \mathfrak t^* \cong H^2(G/T;\mathbb R)$.

\subsection{Locally homogeneous manifolds}\label{SS:lhm}

The next proposition is a corollary of Theorem \ref{T:lerman}.
Its proof is analogous to the one of Corollary ~\ref{C:twistor-}.

\begin{proposition}\label{P:lhm}
Let $G$ be a semisimple Lie group, $K\subset G$ its maximal compact
subgroup of maximal rank and $\Gamma \subset G$ a lattice trivially
intersecting $K$.  Let $V\subset K$ be a connected subgroup that is
the centralizer of a torus in ~$G$. Then the bundle
$$
K/V \to \Gamma \backslash G/V \to \Gamma \backslash G/K
$$
is Hamiltonian and it admits a fibrewise symplectic structure. \qed
\end{proposition}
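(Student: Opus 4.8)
The strategy is to first produce a $G$-invariant fibrewise symplectic form on the homogeneous bundle $K/V \to G/V \to G/K$ and then descend it along the quotient by $\Gamma$. The key input is Theorem \ref{T:lerman}: since $K\subset G$ is a compact subgroup of maximal rank, the Killing form of the semisimple group $G$ is negative definite on $\mathfrak k$, hence nondegenerate, so the hypotheses of Theorem \ref{T:lerman} are satisfied for the principal bundle $K\to G\to G/K$ with its canonical invariant connection. By hypothesis $V\subset K$ is the centralizer of a torus $S\subset G$; let $X\in\mathfrak g$ be a generator of $S$ (so that $S=\overline{\{\exp(tX)\}}$) and set $u:=B(X,-)\in\mathfrak k^*$. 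Then $V$ is exactly the coadjoint isotropy group of $u$, so condition (3) of Theorem \ref{T:lerman} holds; therefore $u$ is fat with respect to the canonical invariant connection in $K\to G\to G/K$.

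Fatness of $u$ gives, by the Sternberg--Weinstein mechanism recalled in Section \ref{S:weinstein}, a nondegenerate coupling form on the associated bundle with fibre the coadjoint orbit $K/V$: concretely, the fibrewise symplectic structure is the Kirillov--Kostant--Souriau form on $K\cdot u\cong K/V$, and the horizontal part of $\Omega$ is built from the curvature of the canonical connection paired against $u$, which is nondegenerate on the horizontal distribution precisely because $u$ is fat. Since the canonical connection in $K\to G\to G/K$ is $G$-invariant and $u$ is $K$-fixed up to coadjoint action (the orbit itself being canonical), the resulting form $\Omega$ on $G\times_K (K/V)=G/V$ is $G$-invariant. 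This identifies $G/V \to G/K$ as a symplectically fat bundle in the homogeneous setting, paralleling Corollary \ref{C:twistor-}.

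The final step is the passage to the locally homogeneous quotient. Because $\Gamma\subset G$ acts on $G/V$ and on $G/K$ by the left translation action, and $\Omega$ is $G$-invariant hence in particular $\Gamma$-invariant, $\Omega$ descends to a closed two-form on $\Gamma\backslash G/V$ whose restriction to each fibre of $\Gamma\backslash G/V \to \Gamma\backslash G/K$ is the nondegenerate KKS form on $K/V$; the hypothesis that $\Gamma$ intersects $K$ trivially guarantees that $\Gamma$ acts freely and the fibration $\Gamma\backslash G/V \to \Gamma\backslash G/K$ with fibre $K/V$ is genuine (the fibres are not collapsed). That it is a Hamiltonian bundle is immediate since the structure group reduces to $K$ acting on $K/V$ by (coadjoint-orbit) Hamiltonian diffeomorphisms. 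The only point requiring a little care — and the one I would watch most closely — is checking that the $\Gamma$-action is properly discontinuous and free on $G/V$ and on $G/K$, so that the quotients are manifolds and the descent of $\Omega$ is legitimate; this follows from $\Gamma$ being a lattice together with $\Gamma\cap K=\{e\}$ (which also forces $\Gamma\cap V=\{e\}$), but it is the step where the hypotheses are genuinely used rather than routine.
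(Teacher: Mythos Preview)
Your proposal is correct and follows essentially the same approach as the paper: apply Theorem~\ref{T:lerman} (via condition~(3), using that the Killing form is negative definite on $\mathfrak k$) to obtain a $G$-invariant fibrewise symplectic structure on $K/V \to G/V \to G/K$, then descend along the free $\Gamma$-action. The paper merely remarks that the argument is analogous to that of Corollary~\ref{C:twistor-}, and your write-up spells out precisely those details.
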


In order to construct examples from the above proposition
one needs to find a compact subgroup $V\subset G$ that is
the centralizer of a torus in $G$. A large family of
examples is provided by {\bf {\em locally homogeneous complex
manifolds}} investigated by Griffiths and Schmid in \cite{MR0259958}.
These are manifolds of the form
$\Gamma \backslash G /V$ where $G$ is a non-compact real
form of a complex semisimple group $G^c$ and $V=G\cap P$,
where $P\subset G^c$ is a parabolic subgroup.
They are indeed complex because  $G/V\subset G^c/B$ is
an open subvariety of a complex projective variety $G^c/B$, and
therefore, inherits the $G$-invariant complex structure. 

The proof of the following lemma uses standard facts from Lie algebras
which can be found for example in Chapter 6 of \cite{MR1349140}. We also
adopt the notation from this book.

\begin{lemma}\label{L:lhm}
Let $G$ be a semisimple Lie group of {\it non-compact type} which is a
real form of a complex semisimple Lie group $G^c$.  Let $P$ be a
parabolic subgroup in $G^c$ such that $V=P\cap G$ is compact.
Then $V=Z_G(S)$ is the centralizer of a torus $S\subset G$.
\end{lemma}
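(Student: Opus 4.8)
The plan is to realise $\mathfrak p$ as the nonnegative part of a grading of $\mathfrak g^c$ associated to an element of $\mathfrak g$, and then to identify $\mathfrak v:=\mathfrak p\cap\mathfrak g$ as the centralizer of that element by carefully tracking the conjugation $\sigma\colon\mathfrak g^c\to\mathfrak g^c$ whose fixed--point set is $\mathfrak g$. Note first that, since $\mathfrak v$ is $\sigma$--stable (being $\sigma$--fixed), one has $\mathfrak v^c=\mathfrak p\cap\sigma(\mathfrak p)$, and $\sigma(\mathfrak p)$ is again a parabolic subalgebra of $\mathfrak g^c$ because $\sigma$ is an automorphism of the real Lie algebra $\mathfrak g^c$ carrying complex subspaces to complex subspaces (hence Borel subalgebras to Borel subalgebras).

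The first --- and, I expect, the crucial --- step is to show that $\mathfrak v$ contains a \emph{compact} Cartan subalgebra of $\mathfrak g$. Choose Borel subalgebras $\mathfrak b_1\subset\mathfrak p$ and $\mathfrak b_2\subset\sigma(\mathfrak p)$. By the classical fact that any two Borel subalgebras of a complex semisimple Lie algebra contain a common Cartan subalgebra, there is a Cartan subalgebra $\mathfrak h^c\subset\mathfrak b_1\cap\mathfrak b_2\subset\mathfrak p\cap\sigma(\mathfrak p)=\mathfrak v^c$. Hence $\operatorname{rank}\mathfrak v^c=\operatorname{rank}\mathfrak g^c$, so the compact reductive algebra $\mathfrak v$ has the same rank as $\mathfrak g$; a maximal torus $\mathfrak t\subset\mathfrak v$ is then a Cartan subalgebra of $\mathfrak g$, and it is compact because $\mathfrak v$, being a compact subalgebra, lies in a maximal compact subalgebra of $\mathfrak g$. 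Thus $\mathfrak t^c$ is a $\sigma$--stable Cartan subalgebra of $\mathfrak g^c$ contained in $\mathfrak p$, all roots in $\Delta=\Delta(\mathfrak g^c,\mathfrak t^c)$ take imaginary values on $\mathfrak t$, and $\sigma(\mathfrak g^\alpha)=\mathfrak g^{-\alpha}$ for every $\alpha\in\Delta$.

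Since $\mathfrak p\supset\mathfrak t^c$, the parabolic $\mathfrak p$ is standard: there is an element $\zeta\in\sqrt{-1}\,\mathfrak t$ (on which the roots take real values) with $\mathfrak p=\mathfrak t^c\oplus\bigoplus_{\alpha(\zeta)\ge 0}\mathfrak g^\alpha$. Put $z_0:=-\sqrt{-1}\,\zeta\in\mathfrak t\subset\mathfrak g$, so that $\operatorname{ad}(z_0)$ is semisimple with purely imaginary spectrum, and let $\mathfrak g^c_k:=\{X\mid[\zeta,X]=kX\}$, giving a grading $\mathfrak g^c=\bigoplus_k\mathfrak g^c_k$ with $\mathfrak p=\bigoplus_{k\ge 0}\mathfrak g^c_k$ and $\mathfrak g^c_0=\{X\mid[z_0,X]=0\}$. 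As $\sigma(z_0)=z_0$ and $\sigma$ is conjugate--linear, $\sigma(\zeta)=-\zeta$, so $\sigma(\mathfrak g^c_k)=\mathfrak g^c_{-k}$; in particular $\mathfrak v^c=\mathfrak p\cap\sigma(\mathfrak p)=\mathfrak g^c_0$. Now if $X\in\mathfrak v=\mathfrak p\cap\mathfrak g$, write $X=\sum_{k\ge 0}X_k$ with $X_k\in\mathfrak g^c_k$; applying $\sigma$ and using $\sigma(X)=X$ together with $\sigma(X_k)\in\mathfrak g^c_{-k}$, uniqueness of the grading decomposition forces $X_k=0$ for $k>0$, so $X\in\mathfrak g^c_0\cap\mathfrak g=\{X\in\mathfrak g\mid[z_0,X]=0\}$. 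Hence $\mathfrak v=\{X\in\mathfrak g\mid[z_0,X]=0\}$. Finally set $S:=\overline{\{\exp(tz_0)\mid t\in\mathbb R\}}\subset G$, a torus since $\operatorname{ad}(z_0)$ has imaginary spectrum; then $\mathfrak v$ is the Lie algebra of $Z_G(S)$, and any $g\in Z_G(S)$ fixes $\zeta$ under $\operatorname{Ad}$, hence preserves the grading and normalizes $\mathfrak p$, so $g\in P$ and $Z_G(S)\subset G\cap P=V$. As the two groups share a Lie algebra and $V$ is connected in the Griffiths--Schmid setting, $V=Z_G(S)$, which is the assertion.

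As indicated, the part I expect to require the most care is the extraction of a compact Cartan subalgebra of $\mathfrak g$ inside $\mathfrak v$: this is the only place the hypothesis that $V$ is compact is used in an essential way (through the two--Borels lemma and the conjugacy of maximal compact subalgebras), and it is what makes $\mathfrak p$ \emph{standard with respect to a compact Cartan} --- after which the grading computation and the conjugation argument are routine bookkeeping with root--space decompositions.
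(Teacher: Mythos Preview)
Your argument is correct and reaches the same conclusion as the paper, but by a genuinely different route. The paper identifies $\mathfrak v^{c}$ with $\mathfrak p\cap\mathfrak k^{c}$ and then arranges a single system of simple roots $\Pi$ with respect to which \emph{both} $\mathfrak k^{c}$ and $\mathfrak p$ are standard, so that the intersection is read off as $\mathfrak t\oplus\sum_{\alpha\in[M_p\cap M_k]}\mathfrak g_\alpha$ and recognised as the centraliser of an abelian piece of $\mathfrak t$, from which a real generator $X$ is extracted. You instead work with $\mathfrak v^{c}=\mathfrak p\cap\sigma(\mathfrak p)$, use the ``two Borels share a Cartan'' lemma to produce a compact Cartan $\mathfrak t\subset\mathfrak v$, realise $\mathfrak p$ via a grading element $\zeta\in i\mathfrak t$, and exploit the fact that the conjugation $\sigma$ reverses the grading to get $\mathfrak v^{c}=\mathfrak g^{c}_{0}$ directly. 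Your path is somewhat more self-contained: it sidesteps both the equality $\mathfrak v^{c}=\mathfrak p\cap\mathfrak k^{c}$ and the simultaneous standardisation of $\mathfrak k^{c}$ and $\mathfrak p$, neither of which the paper justifies. The paper's route is shorter once those points are granted, and it displays the inclusion $V\subset K$ explicitly (though your construction also gives this, since $z_0\in\mathfrak t\subset\mathfrak v\subset\mathfrak k$). Your final group-level step, using that $P$ is its own normaliser in $G^{c}$ to obtain $Z_G(S)\subset V$ and then invoking connectedness of $V$, is a point the paper handles only at the Lie-algebra level.
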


\begin{proof}

Let 
$\mathfrak v \subset \mathfrak k \subset \mathfrak g \subset \mathfrak g^c$
denote the Lie algebras corresponding to the groups in the statement of 
the lemma. Here $\mathfrak k\subset \mathfrak g$ is a maximal compact
subalgebra. Let $\mathfrak p \subset \mathfrak g^c$ be the parabolic
subalgebra corresponding to $P\subset G.$

Let $\Delta$ be a root system for $\mathfrak g^c$ 
and let $\Pi \subset \Delta$ be the subsystem of simple roots
such that
$$
\mathfrak k^c = \mathfrak t \oplus 
\sum_{\alpha \in [M_k]}\mathfrak g_{\alpha}
$$
for some $M_k\subset \Pi.$ This is possible due to
the fact that $\mathfrak k^c$ is reductive in $\mathfrak g^c.$
Now observe that 
$$
\mathfrak v^c = (\mathfrak p \cap \mathfrak g)^c = 
\mathfrak p \cap \mathfrak k^c.
$$
Since $\mathfrak p$ is a parabolic subalgebra we have
$$
\mathfrak p = \mathfrak t \oplus 
\sum_{\alpha \in [M_p]\cup \Delta^+}\mathfrak g_{\alpha}
$$
where $M_p \subset \Pi$ and $\Delta^+$ is the set of positive 
roots with respect to $\Pi$. 
We then get 
$$
\mathfrak v^c = \mathfrak t \oplus 
\sum_{\alpha \in [M_p\cap M_k]} \mathfrak g_{\alpha}
$$
which means that 
$\mathfrak v^c = \mathfrak z_{\mathfrak g^c}(\mathfrak a)$
is the centralizer of an Abelian subalgebra
$\mathfrak a \subset \mathfrak t.$ There exists
a vector $X\in \mathfrak a$ such that
$\mathfrak z_{\mathfrak g^c}(\mathfrak a) = 
\mathfrak z_{\mathfrak g^c}(X)$ and this vector can be
chosen to be {\em real}, that is $X\in \mathfrak a \cap \mathfrak g.$
This implies
$$
\mathfrak v = \mathfrak v^c \cap \mathfrak g
= \mathfrak z_{\mathfrak g^c}(X) \cap \mathfrak g 
= \mathfrak z_{\mathfrak g}(X)
$$
which proves that $V \subset G$ is a centralizer of the
torus $S:= \overline {\exp(tX)}.$
\end{proof}

Notice that it follows from the above proof that
$V=Z_G(S) = Z_K(S)$ where $K\subset G$ is the maximal
compact subgroup of $G$ corresponding to the subalgebra
$\mathfrak k.$ This implies that $K/V$ is a K\"ahler
manifold and the bundle
$$
K/V \to G/V \to G/K
$$
is a Hamiltonian fat bundle. Choosing an appropriate 
lattice $\Gamma \subset G$  we obtain the following result.

\begin{theorem}\label{T:lhm}
Let $G$ be a noncompact real form of a complex semisimple Lie group
$G^c$ and let $P\subset G^c$ be a parabolic subgroup such that 
$V:=P\cap G$ is compact.  Let $K\subset G$ be a maximal compact subgroup
containing $V$ and let $\Gamma \subset G$ be a cocompact lattice
trivially intersecting $K$. Then the bundle
$$
K/V \to \Gamma \backslash G/V \to \Gamma\backslash G/K
$$
is Hamiltonian and admits a fibrewise symplectic structure. \qed
\end{theorem}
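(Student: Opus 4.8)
The plan is to combine Lemma~\ref{L:lhm}, Proposition~\ref{P:lhm}, and the standard descent argument for invariant fibrewise structures under a lattice quotient, in that order. First I would invoke Lemma~\ref{L:lhm} to conclude that the compact subgroup $V = P\cap G$ is the centralizer $Z_G(S)$ of a torus $S\subset G$; this is exactly the hypothesis needed to feed into the homogeneous case. Next, since $V$ is connected (being a centralizer of a torus in a connected group) and sits inside the maximal compact $K$, the remark following Lemma~\ref{L:lhm} already observes that $V = Z_K(S)$, so $V$ is also the centralizer of a torus \emph{inside} $K$. By Theorem~\ref{T:lerman} (equivalence of conditions (1) and (3)), the covector $u = B(X_{S},-)\in\mathfrak k^*$ — where $X_S$ generates $S$ — is a fat vector for the canonical invariant connection in the principal bundle $K\to G\to G/K$, where one uses a maximal compact $K$ of maximal rank. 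Via the Sternberg--Weinstein theorem applied to the coadjoint orbit $K/V$ with its moment map, this produces a \emph{$G$-invariant} fibrewise symplectic form on the associated bundle $K/V \to G\times_K K/V = G/V \to G/K$, which is Hamiltonian by construction.

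The final step is to push this down through the lattice. Because $\Gamma\subset G$ acts freely on $G/K$ (it intersects $K$ trivially, and $K$ is the maximal compact so all isotropy of the $G$-action on $G/K$ is conjugate into $K$) and cocompactly, the quotient map $G/V \to \Gamma\backslash G/V$ is a covering, and the $G$-invariance of the fibrewise symplectic form guarantees in particular $\Gamma$-invariance, so the form descends to a fibrewise symplectic form on $\Gamma\backslash G/V \to \Gamma\backslash G/K$. The Hamiltonian structure descends likewise since the transition functions of the bundle lie in the image of $K\to\Ham(K/V)$ and are unaffected by the quotient. This is the same mechanism as in the proof of Corollary~\ref{C:twistor-}.

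The point requiring the most care is the verification that $V$ being a centralizer of a torus in $G$ upgrades to being a centralizer of a torus in $K$ — equivalently, that the generator $X$ of $S$ produced in Lemma~\ref{L:lhm} can be taken in $\mathfrak k$ rather than merely in $\mathfrak g$. The lemma's proof already arranges $X\in\mathfrak a\cap\mathfrak g$ with $\mathfrak a\subset\mathfrak t\subset\mathfrak k^c$; since $\mathfrak t$ is a Cartan of $\mathfrak k^c$ and $V$ is compact, a compactness/conjugacy argument places $X$ in a compactly embedded Cartan, hence in (a $G$-conjugate of) $\mathfrak k$, so that $\mathfrak z_{\mathfrak g}(X)\subset\mathfrak k$ and $\mathfrak z_{\mathfrak k}(X) = \mathfrak z_{\mathfrak g}(X) = \mathfrak v$. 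Everything else — the lattice quotient, the invariance of the coupling form, the Hamiltonian property — is routine given the results already established in the excerpt. Hence the theorem follows. \qed
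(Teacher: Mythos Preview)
Your proposal is correct and follows essentially the same route as the paper: invoke Lemma~\ref{L:lhm} to identify $V$ as a centralizer of a torus, use the observation that $V=Z_G(S)=Z_K(S)$ (the remark immediately following the lemma), apply Theorem~\ref{T:lerman}/Proposition~\ref{P:lhm} to obtain the $G$-invariant fibrewise symplectic structure on $K/V\to G/V\to G/K$, and then descend through the lattice exactly as in Corollary~\ref{C:twistor-}. The paper compresses all of this into the short paragraph preceding the statement and a \qed; your version simply unpacks the same steps with more justification (including the maximal-rank point for $K$, which is implicit in the paper via $\mathfrak t\subset\mathfrak k^c$ from the proof of Lemma~\ref{L:lhm}).
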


\subsection{A relation to a question of Weinstein}
\label{SS:weinstein}
Weinstein \cite{MR82a:53038} was interested in
constructing a simply connected symplectic and not K\"ahler manifold
as a total space of a bundle.  We don't know if there are simply
connected symplectically fat bundles. However, the above theorem
provides many examples of symplectically fat bundles which have
non-K\"ahler fundamental groups. Hence they are not homotopy
equivalent to K\"ahler manifolds. This idea was used first by
Reznikov in \cite{MR1225431} for twistor bundles over 
spaces of negative curvature.

\begin{proposition}\label{P:weinstein}
Let $K/V \to \Gamma \backslash G/V \to \Gamma\backslash G/K$ be a
fibre bundle as in Theorem \ref{T:lhm}.  Suppose that $G/K$ is not
Hermitian symmetric.  Then the bundle is symplectically fat
and its total space is not homotopy equivalent to a K\"ahler manifold.
\end{proposition}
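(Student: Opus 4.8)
The plan is to deal with the two assertions separately; only the non-Kähler statement requires real work.

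For the first assertion I would simply invoke Theorem~\ref{T:lhm}: the fibrewise symplectic form constructed there is the coupling form of the canonical invariant connection in $K\to G\to G/K$, pushed down through the (free) action of the lattice $\Gamma$, and by Theorem~\ref{T:lerman} this coupling form is nondegenerate precisely because the isotropy group $V$ of the chosen covector is the centralizer of a torus in $G$, i.e. the covector is fat. So the bundle is symplectically fat with no further argument.

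For the second assertion I would argue through the fundamental group, following Reznikov~\cite{MR1225431}. First I would compute $\pi_1$ of the total space. The fibre $K/V$ is simply connected: $V$ is connected (it is the centralizer of a torus in the connected compact group $K$) and contains a maximal torus $T$ of $K$, so the surjection $\pi_1(T)\twoheadrightarrow\pi_1(K)$ factors through $\pi_1(V)$, which forces $\pi_1(K/V)=0$. The base $\Gamma\backslash G/K$ is aspherical with fundamental group $\Gamma$, because $G/K$ is contractible and $\Gamma$ acts freely on it (it meets every conjugate of $K$, hence every conjugate of $V$, trivially). Feeding these two facts into the homotopy exact sequence of the fibre bundle
\[
K/V \longrightarrow \Gamma\backslash G/V \longrightarrow \Gamma\backslash G/K
\]
gives $\pi_1(\Gamma\backslash G/V)\cong\Gamma$. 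Now suppose, for contradiction, that the (compact) total space is homotopy equivalent to a compact Kähler manifold $Z$. Then $\Gamma=\pi_1(Z)$ is a Kähler group. But $\Gamma$ is a cocompact lattice in $G$, and by hypothesis $G/K$ is not Hermitian symmetric. By the rigidity theory of harmonic maps of compact Kähler manifolds into locally symmetric spaces of noncompact type (Sampson, Siu, Corlette, Carlson--Toledo), a cocompact lattice in a semisimple Lie group whose symmetric space carries a non-Hermitian irreducible factor cannot be the fundamental group of a compact Kähler manifold. This contradiction finishes the proof.

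I expect the only genuine obstacle to be this last external input; everything else is routine bookkeeping with the homotopy exact sequence and the constructions of Section~\ref{SS:lhm}. One point that needs a little care is the reducible case, in which $G/K$ splits off both Hermitian and non-Hermitian irreducible factors: there I would apply the harmonic-map rigidity not to $\Gamma$ directly but to its projection to a non-Hermitian simple factor of $G$, which is still Zariski dense by the Borel density theorem, so that the Hermitian factors cannot absorb the obstruction.
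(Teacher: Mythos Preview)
Your proof is correct and handles the first assertion identically to the paper. For the non-K\"ahler assertion you take a genuinely different route. You compute $\pi_1(\Gamma\backslash G/V)\cong\Gamma$ via the homotopy exact sequence and then invoke Carlson--Toledo type rigidity to rule out $\Gamma$ being a K\"ahler group; this is Reznikov's original strategy, which the paper cites but does not itself follow. The paper instead exploits the Hamiltonian structure directly: it appeals to the Lalonde--McDuff cohomological splitting for Hamiltonian bundles \cite{MR1941438} to see that the projection $\pi$ is surjective on homology, and then applies Siu's theorem (Theorem~6.14 in \cite{MR97d:32037}) to the composite of a putative K\"ahler homotopy equivalence $M\to\Gamma\backslash G/V$ with the bundle projection, concluding that this composite would have to be homotopic to a holomorphic map for some invariant complex structure on $G/K$ --- impossible since $G/K$ is not Hermitian symmetric. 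Your approach is more elementary on the topological side (no c-splitting needed) and makes the obstruction transparently a property of $\pi_1$ alone; the paper's approach keeps the fat bundle structure in play and, by working with a homologically surjective map to $\Gamma\backslash G/K$ rather than with $\Gamma$ as an abstract group, sidesteps the reducible-factor issue you correctly flag as needing extra care.
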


\begin{proof} In view of Theorem \ref{T:lhm}, it remains to prove that 
$\Gamma\backslash G/V$ is non-K\"ahler. The proof is analogous to the
proof of Theorem 6.17 in \cite{MR97d:32037}. Suppose that the total space is
homotopy equivalent to a K\"ahler manifold $M$.  Since the bundle is
Hamiltonian with compact structure group the projection $\pi$
induces a surjective homomorphism on homology. This follows from a
general cohomological splitting for Hamiltonian bundles 
\cite[Corollary 4.10]{MR1941438}.

Now, according to a theorem of Siu \cite[Theorem 6.14]{MR97d:32037}, the
composition of the homotopy equivalence $M\to \Gamma\backslash G/V$
and the bundle projection is homotopic to a holomorphic map for some
invariant complex structure on $G/K$. This contradicts the assumption
that $G/K$ is not Hermitian symmetric.
\end{proof}

\section{Infinite dimensional examples}\label{S:infty}

Let $(M,\omega)$ and $(W,\omega_W)$ be symplectic manifolds and let
$\operatorname{Symp}(M,W)$ denote the space of symplectic embeddings
of $M$ into $W$. The group of symplectic diffeomorphisms of
$(M,\omega)$ acts freely from the right on this space of embeddings
while symplectic diffeomorphisms of $(W,\omega_W)$ act from the
right. We denote the quotient by $\operatorname{Conf}(M,W)$ and
following Gal and K\k edra \cite{MR2219218} it is called the space of
{\bf {\em symplectic configurations of $M$ in $W$}}.  There is a principal bundle
$$
\Symp\Mo  \to \Symp(M,W)\to \operatorname{Conf}(M,W).
$$
It admits a symplectic connection $\mathcal H$ whose
curvature two-form is given by the following formula
\cite[Section 4.2]{MR2219218}
$$
\Theta(X,Y)(f) := \{H_X,H_Y\}\circ f,
$$
where $f\in \Symp(M,W)$ is a symplectic embedding
and $X,Y$ are horizontal vectors at $f$. Recall, that
a vector tangent to the space of embedding is a section
of the pull back bundle $f^*(TW).$ Such a section can be
extended to a Hamiltonian vector field in a neighbourhood of
$f(M)\subset W.$ The functions $H_X,H_Y:W \to \mathbb R$ are
the Hamiltonians of these extensions.

\begin{proposition}\label{T:infty}
If the dimension of $W$ is bigger than the dimension of $M$
then the  connection $\mathcal H$ is fat.
\end{proposition}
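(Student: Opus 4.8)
The plan is to reduce fatness of $\mathcal H$ to a pointwise linear-algebra statement about the curvature form $\Theta(X,Y)(f)=\{H_X,H_Y\}\circ f$, and then exhibit, for each symplectic embedding $f$ and each nonzero horizontal vector $X$ at $f$, a horizontal vector $Y$ at $f$ with $\Theta(X,Y)(f)\not\equiv 0$. Concretely, fix $f\in\Symp(M,W)$. A horizontal vector $X$ at $f$ is a section of $f^*(TW)$ which is $\Symp(M,\omega)$-orthogonal to the vertical distribution; the vertical vectors are exactly the sections coming from Hamiltonian vector fields tangent to $f(M)$. Using the splitting of $f^*(TW)$ along $f(M)$ into the tangent bundle $Tf(M)$ and a symplectic complement (a ``symplectic normal bundle''), one checks that the horizontal vectors are precisely the sections whose values lie in the symplectic normal directions — this uses that $\dim W>\dim M$, so the normal bundle is nonzero. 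For such $X$, the extension $H_X\colon W\to\mathbb R$ can be chosen so that its Hamiltonian vector field restricted to $f(M)$ is exactly the prescribed normal section.

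The key step is then the following: given a nonzero normal section $X$, I want a second normal section $Y$ with $\{H_X,H_Y\}|_{f(M)}\not\equiv 0$. Pick a point $q\in f(M)$ where $X(q)\neq 0$. Since $X(q)$ lies in a symplectic normal direction, there is a tangent-to-$W$ vector $v$ with $\omega_W(X(q),v)\neq 0$; moreover $v$ can itself be taken normal (the normal bundle is symplectic, hence any nonzero normal vector has a normal symplectic partner). One then constructs $Y$ as (the horizontal vector corresponding to) a normal section whose associated Hamiltonian $H_Y$ has $dH_Y(q)$ prescribed so that $\{H_X,H_Y\}(q)=\omega_W(X_{H_X},X_{H_Y})(q)=\omega_W(X(q),v)\neq 0$. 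This is possible because the $1$-jets of the $H_X$'s at $q$, as $X$ ranges over normal sections, fill out the covectors dual (via $\omega_W$) to the normal directions at $q$ — and $v$ is such a direction. Hence $\Theta(X,Y)(f)(q)\neq 0$, so the bilinear form $(X,Y)\mapsto\Theta(X,Y)(f)$ is nondegenerate on $\mathcal H_f$, which is exactly fatness.

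The main obstacle I expect is the functional-analytic bookkeeping: horizontal vectors are sections and the curvature takes values in functions on $M$, so ``nondegeneracy'' must be interpreted in the sense appropriate to these infinite-dimensional (tame Fréchet) spaces, and one must make sure the correspondence ``normal section $\leftrightarrow$ germ of Hamiltonian near $f(M)$'' is well defined and surjective onto the relevant jets. The geometric content, by contrast, is soft: it is just the observation that in a symplectic manifold any nonzero vector has a symplectic partner, applied fibrewise to the symplectic normal bundle of $f(M)$, which is nonzero precisely because $\dim W>\dim M$. I would also remark that the hypothesis is sharp: if $\dim W=\dim M$ the embeddings are open maps, the normal bundle vanishes, horizontal vectors vanish, and the curvature is identically zero, so no fat vector exists.
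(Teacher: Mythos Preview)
Your proposal is correct and follows essentially the same route as the paper's proof: the paper identifies horizontal vectors at $f$ with Hamiltonians $H$ constant on $f(M)$ (equivalently, with sections of the symplectic normal bundle, exactly as you do) and then asserts that the nondegeneracy of $(H,F)\mapsto\{H,F\}\circ f$ ``easily follows from the local expression of the Poisson bracket.'' Your argument---pick $q$ with $X(q)\neq 0$, use that the symplectic normal bundle is symplectic to find a normal partner $v$, and observe $\{H_X,H_Y\}(q)=\omega_W(X(q),v)\neq 0$---is precisely that local computation written out, so there is no substantive difference in strategy.
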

\begin{proof}
This form is nondegenerate on the horizontal subspace at an
embedding $f$ if and only if for any nonzero function $H:W\to \mathbb
R$ constant on $f(M)$ there exists a function $F:W\to \mathbb R$ also
constant on $f(M)$ such that the Poisson bracket $\{H,F\}$ is nonzero
on $f(M)$. The statement easily follows from the local expression of
the Poisson bracket.
\end{proof}

\begin{cor}
The coupling form on the total space 
of the associated tautological bundle over symplectic configurations 
$$
M\to E:=\Symp(M,W)\times_{\Symp\Mo}M \to \operatorname{Conf}(M,W)
$$
is nondegenerate. \qed
\end{cor}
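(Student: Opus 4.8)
The plan is to deduce the corollary directly from Proposition~\ref{T:infty} together with the Sternberg--Weinstein theorem, so that almost no new work is needed. First I would recall the general setup: for a Hamiltonian bundle with structure group $\Symp\Mo$ and a chosen Hamiltonian (here: symplectic) connection $\mathcal H$, the coupling form $\Omega$ on the total space of the associated bundle $M \to E \to \operatorname{Conf}(M,W)$ is defined exactly as in Section~\ref{S:weinstein}: it restricts to $\omega$ on each fibre, is $\Omega$-orthogonal to the vertical distribution on $\mathcal H$, and on horizontal vectors is given by pairing the moment map with the curvature. The moment map in question is the embedding of $M$ into $C^\infty(M)^*=\mathfrak{ham}\Mo^*$ given by evaluation, whose image is a coadjoint orbit.

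Next I would observe that nondegeneracy of $\Omega$ is a fibrewise-plus-horizontal statement: at a point $[f,x]\in E$, the tangent space splits as the vertical space $T_xM$ (on which $\Omega$ is $\omega_x$, hence nondegenerate) $\Omega$-orthogonally to the horizontal space $\mathcal H_f$. So $\Omega_{[f,x]}$ is nondegenerate if and only if its restriction to $\mathcal H_f$ is nondegenerate, and by the construction of the coupling form this restriction is precisely the two-form
$$
(X,Y)\longmapsto \langle \operatorname{ev}_x,\Theta_f(X,Y)\rangle = \Theta_f(X,Y)(x),
$$
i.e.\ the curvature two-form evaluated at the point $x\in M$. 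Thus $\Omega$ is nondegenerate everywhere on $E$ if and only if every point $x\in M$ is fat with respect to $\mathcal H$ in the sense of Section~\ref{S:weinstein}, which is exactly the assertion that the connection $\mathcal H$ is fat. But Proposition~\ref{T:infty} tells us that, under the hypothesis $\dim W > \dim M$, the connection $\mathcal H$ is fat. Hence $\Omega$ is nondegenerate.

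The only point requiring a word of care is that all of this takes place in an infinite-dimensional (Fr\'echet / Banach-manifold) setting, so ``nondegenerate'' must be interpreted as: the map $v\mapsto \Omega(v,-)$ is injective on the relevant tangent space (weak nondegeneracy), which is precisely the content of the criterion in the proof of Proposition~\ref{T:infty}. I would remark that the fibre $M$ is finite-dimensional, so the vertical part is genuinely symplectic in the strong sense, and the horizontal part inherits exactly the weak nondegeneracy supplied by Proposition~\ref{T:infty}; combining an $\Omega$-orthogonal direct sum of an injective form and another injective form yields an injective form. I expect the main (and essentially only) obstacle to be bookkeeping: making sure the splitting $T_{[f,x]}E = T_xM \oplus \mathcal H_f$ is genuinely $\Omega$-orthogonal and that the curvature identity $\langle\operatorname{ev}_x,\Theta_f(X,Y)\rangle=\{H_X,H_Y\}(f(x))$ matches the formula quoted from \cite{MR2219218}; once that is in place the corollary is immediate.
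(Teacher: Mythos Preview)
Your proposal is correct and follows exactly the approach the paper intends: the corollary is marked with \qed\ and is meant to follow immediately from Proposition~\ref{T:infty} together with the Sternberg--Weinstein mechanism recalled in Section~\ref{S:weinstein} (equivalently Theorem~\ref{T:weinstein}). Your elaboration---splitting the tangent space $\Omega$-orthogonally into vertical and horizontal parts and reducing nondegeneracy of $\Omega$ to fatness of every point, then invoking Proposition~\ref{T:infty}---is precisely the content hidden behind that \qed, and your remark on weak nondegeneracy in the infinite-dimensional setting is an appropriate clarification that the paper leaves implicit.
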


We obtain this way a large family of examples of infinite dimensional
symplectic manifolds. Notice that the symplectic form can be
calculated using the following formula
$$
\Omega_{[f,x]}\left (\frac{d}{dt}[\varphi_t,x],\frac{d}{dt}[\psi_t,x]\right )=
(\omega_W)_{f(x)}\left (\frac{d}{dt}\varphi_t(x)),\frac{d}{dt}\psi_t(x)\right )\\
$$ 
where $\varphi_0=\psi_0=f$ and $\varphi_t=\psi_t\in \Symp(M,W)$.
These examples are interesting because they admit a Hamiltonian action
of the group of Hamiltonian diffeomorphisms of the manifold $\Wo$.

\begin{proposition}\label{P:moment}
The group $\Ham\Wo$ acts on the total space $E$ of the tautological
bundle over the symplectic configurations from the left and the action
preserves the symplectic form.  Moreover, the moment map
$$
\Psi:\Symp(M,W)\times_{\Symp\Mo}M \to \mathfrak{ham}\Wo^* 
= C^{\infty}(W)^*
$$
is defined by the following formula
$$
\left<\Psi[f,x],F\right>:= F(f(x)).
$$
\end{proposition}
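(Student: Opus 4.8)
The plan is to verify the three assertions in turn: that $\Ham\Wo$ acts on $E$, that the action preserves $\Omega$, and that the stated formula defines the moment map. First I would make the action explicit. An element $g\in \Ham\Wo$ sends a symplectic embedding $f\colon M\to W$ to $g\circ f$, which is again a symplectic embedding; since $g$ acts on the left and the structure group $\Symp\Mo$ acts on the right, the two actions commute, so $g\cdot[f,x]:=[g\circ f,x]$ is well defined on $E=\Symp(M,W)\times_{\Symp\Mo}M$. That this is a smooth left action is routine.

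Next I would check invariance of the coupling form $\Omega$ using the explicit formula recalled just before the proposition,
$$
\Omega_{[f,x]}\left(\tfrac{d}{dt}[\varphi_t,x],\tfrac{d}{dt}[\psi_t,x]\right)
=(\omega_W)_{f(x)}\left(\tfrac{d}{dt}\varphi_t(x),\tfrac{d}{dt}\psi_t(x)\right),
$$
with $\varphi_0=\psi_0=f$. Applying $g$ replaces $\varphi_t$ by $g\circ\varphi_t$ and $\psi_t$ by $g\circ\psi_t$, so the right-hand side becomes $(\omega_W)_{g(f(x))}\big(dg(\tfrac{d}{dt}\varphi_t(x)),dg(\tfrac{d}{dt}\psi_t(x))\big)$, which equals the original value because $g$ is a symplectomorphism of $\Wo$. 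Hence $g^*\Omega=\Omega$. (Alternatively one argues on vertical vectors, where $\Omega$ restricts to $\omega$ and $g$ acts trivially on the $M$-factor, and on horizontal vectors via the curvature formula $\Theta(X,Y)(f)=\{H_X,H_Y\}\circ f$, which transforms correctly since $g$ preserves Poisson brackets.)

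Finally, for the moment map I would define $\langle\Psi[f,x],F\rangle:=F(f(x))$ for $F\in C^\infty(W)=\mathfrak{ham}\Wo^*$ (more precisely $F$ of zero mean, or working modulo constants), and check the defining identity $d\langle\Psi,F\rangle = \iota_{X_F^E}\Omega$, where $X_F^E$ is the vector field on $E$ generated by the Hamiltonian $F$. Differentiating $[f,x]\mapsto F(f(x))$ along a curve $\tfrac{d}{dt}[\varphi_t,x]$ gives $dF_{f(x)}(\tfrac{d}{dt}\varphi_t(x))$; on the other hand the infinitesimal action of $F$ moves $f$ to $X_F^W\circ f$, so $X_F^E$ at $[f,x]$ is represented by $\tfrac{d}{dt}[\Phi_t^F\circ f,x]$ where $\Phi_t^F$ is the Hamiltonian flow of $F$, and plugging into the displayed formula for $\Omega$ yields $(\omega_W)_{f(x)}(X_F^W(f(x)),\tfrac{d}{dt}\varphi_t(x)) = dF_{f(x)}(\tfrac{d}{dt}\varphi_t(x))$ by the defining property $\iota_{X_F^W}\omega_W = dF$. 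The two expressions agree, which is exactly the moment map equation. Equivariance is immediate from the formula: $\langle\Psi(g\cdot[f,x]),F\rangle = F(g(f(x))) = (F\circ g)(f(x)) = \langle\Psi[f,x],g^*F\rangle$, matching the coadjoint action of $\Ham\Wo$ on $C^\infty(W)^*$.

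The only real subtlety — the main obstacle — is bookkeeping around the infinite-dimensional setting: one must be careful that $C^\infty(W)$ is not the full Lie algebra of $\Ham\Wo$ unless $W$ is compact (the correct model is compactly supported, or zero-mean, Hamiltonians), that the tangent vectors to $\Symp(M,W)$ are sections of $f^*TW$ and the Hamiltonian extensions $H_X$ are only local, and that $\Psi$ is genuinely smooth and takes values in the appropriate dual. Once the function spaces are pinned down, each of the three claims reduces to the short computations sketched above, and no hard analysis is needed beyond the local expression of $\omega_W$.
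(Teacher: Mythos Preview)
Your proposal is correct and follows essentially the same route as the paper: the action is defined by $[f,x]\mapsto[\Phi\circ f,x]$, invariance of $\Omega$ is checked directly from the explicit evaluation formula using that $\Phi^*\omega_W=\omega_W$, and the moment map identity $d(F\circ ev)=\iota_{\underline F}\,\Omega$ is verified by differentiating $F(f_t(x))$ and comparing with $\Omega$ evaluated on the pair $(\tfrac{d}{dt}[\Phi_t\circ f,x],\tfrac{d}{dt}[f_t,x])$. Your additional remarks on equivariance and on the infinite-dimensional bookkeeping go slightly beyond what the paper writes out, but the computational core is identical.
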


\begin{proof}
Let $\Phi \in \Ham\Wo$ and let $\widehat \Phi: E \to E$ be defined
by $\widehat \Phi [f,x] := [\Phi \circ f,x].$ We need to show that
$\widehat \Phi $ preserves the symplectic form $\Omega$.
This is the following calculation.
\begin{eqnarray*}
\widehat \Phi^*\Omega_{[f,x]}
\left (\frac{d}{dt}[\varphi_t,x],\frac{d}{dt}[\psi_t,x]\right)
&=&
\Omega_{[\Phi\circ f,x]}
\left (\frac{d}{dt}[\Phi\circ\varphi_t,x],\frac{d}{dt}[\Phi\circ\psi_t,x]\right )\\
&=&
(\omega_W)_{\Phi(f(x))}
\left (\frac{d}{dt}\Phi(\varphi_t(x)),\frac{d}{dt}\Phi(\psi_t(x))\right )\\
&=&
(\Phi^*\omega_W)_{f(x)}
\left (\frac{d}{dt}\varphi_t(x),\frac{d}{dt}\psi_t(x))\right )\\
&=&
(\omega_W)_{f(x)}
\left (\frac{d}{dt}\varphi_t(x),\frac{d}{dt}\psi_t(x))\right )\\
&=&
\Omega_{[f,x]}
\left (\frac{d}{dt}[\varphi_t,x],\frac{d}{dt}[\psi_t,x]\right)
\end{eqnarray*}

Let $ev:E \to W$ be the evaluation map defined by $ev[f,x]=f(x)$.
Given a function $F:W\to \mathbb R$ we denote by $\underline F$
the vector field on $E$ generated by $F.$ We have to check that 
$$
d(F\circ ev) = i_{\underline F}\Omega
$$
for every function $F:W\to\mathbb R$. Let $f_t \in \Symp(M,W)$ be
a path of embeddings with $f_0=f$ and let $X_F$ denote the
Hamiltonian vector field on $\Wo$ generated by the flow
$\Phi_t\in \Ham\Wo$ defined by the Hamiltonian $F$.
We have the following computation.

\begin{eqnarray*}
d(F\circ ev)_{[f,x]}\left (\frac{d}{dt}[f_t,x]\right )&=&
\frac{d}{dt}F(ev[f_t,x])\\
&=&
\frac{d}{dt}F(f_t(x))\\
&=&dF(f_t(x))\\
&=&
(i_{X_F}\omega_W)_{f(x)}\left (\frac{d}{dt}f_t(x)\right )\\
&=&
(\omega_W)_{f(x)}\left (\frac{d}{dt}\Phi_t(f(x)),\frac{d}{dt}f_t(x)\right )\\
&=&
\Omega_{[f,x]}\left (\frac{d}{dt}[\Phi_t\circ f,x],\frac{d}{dt}[f_t,x]\right )\\
&=&
(i_{\underline F}\Omega)\left (\frac{d}{dt}[f_t,x]\right )
\end{eqnarray*}

\end{proof}

\section{A duality of fat bundles} \label{S:duality}

Let $G$ be a noncompact semisimple Lie group with a maximal compact
subgroup $K\subset G$ and let $\Gamma \subset G$ be an irreducible
cocompact lattice trivially intersecting $K$. Assume that $G$ is a
real from of a complex semisimple group $G^c$.  Let $M\subset G^c$ be
a maximal compact subgroup. It was observed by Okun \cite{MR1875614}
that the map $\Gamma \backslash G/K \to BK$ classifying the principal
bundle
$$
K\to \Gamma\backslash G \to \Gamma \backslash G/K = B\Gamma
$$
lifts to a map $\beta\colon \Gamma \backslash G/K \to M/K$ after passing to a
sublattice of finite index if necessary. Moreover this map is
tangential, that is the pull back of the tangent bundle of the target
manifold is isomorphic to the tangent bundle of the source
manifold. The homomorphism $H^*(M/K)\to H^*(B\Gamma)$ induced
by the above tangential map was investigated by Matsushima
in \cite{MR0141138}. His main result says that this homomorphism
is injective in all degrees and surjective in degrees
smaller than the rank of $G.$

We want to apply this observation to a special case of
those semisimple noncompact Lie groups $G$ whose
maximal compact subgroup $K\subset G$ is of maximal rank.
We obtain the following duality of fat bundles.

\begin{proposition}\label{P:dual}
Let $\Gamma \backslash G/K$ be a locally symmetric space of noncompact type
and let $M/K$ be its dual. Assume moreover that $K\subset G$ is a maximal
compact subgroup of maximal rank. The following statements hold;
\begin{enumerate}
\item
There is a pullback diagram of $K$-principal bundles
$$
\xymatrix
{
K\ar[r] \ar[d] & K \ar[d]\\
\Gamma\backslash G \ar[r]^{\tilde \beta} \ar[d]^p & M  \ar[d]^{\pi}\\
B\Gamma \ar[r]^{\beta} & M/K 
}
$$ 
\item
Both bundles have the same nonempty sets of fat vectors. 
\item
The pull back of the horizontal distribution
$\mathcal H_{M}\subset TM$ is isomorphic (as a bundle) 
to the horizontal distribution
$\mathcal H_{\Gamma \backslash G}\subset T(\Gamma \backslash G).$
\item 
The morphism $\tilde \beta$ does not preserve the connections.
\end{enumerate}
\end{proposition}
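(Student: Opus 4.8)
The plan is to treat the four assertions in order, deriving the last one from the existence of a fat vector established in the second. For (1) I would quote Okun's result \cite{MR1875614} directly: after replacing $\Gamma$ by a finite-index sublattice if necessary, the map classifying $K\to\Gamma\backslash G\to B\Gamma$ factors as $\beta$ followed by the map $M/K\to BK$ classifying $K\to M\to M/K$. Since the latter bundle is the pullback of the universal $K$-bundle $EK\to BK$ along that classifying map, pulling it back along $\beta$ reproduces the pullback of $EK\to BK$ along the classifying map of $\Gamma\backslash G\to B\Gamma$, which is $\Gamma\backslash G\to B\Gamma$ itself. This is precisely the asserted pullback square, with $\tilde\beta$ the tautological bundle map covering $\beta$.

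For (2) the key observation is that the root-theoretic data entering Theorem \ref{T:lerman} is the same for both bundles. Indeed $\mathfrak g$ and the Lie algebra $\mathfrak u$ of $M$ are two real forms of the common complexification $\mathfrak g^c$, both containing the maximal-rank subalgebra $\mathfrak k$ in the same way, and the restrictions to $\mathfrak k$ of their Killing forms agree, being the restriction of the Killing form of $\mathfrak g^c$. Hence the Cartan subalgebra $\mathfrak t^c$, the root system $\Delta$, its subsystem $\Delta(\mathfrak k)$, the forbidden walls $C_\alpha$ with $\alpha\in\Delta\setminus\Delta(\mathfrak k)$, and the identification $u\mapsto X_u$ all coincide; by Theorem \ref{T:lerman} a vector $u\in\mathfrak k^*$ is fat for one canonical connection if and only if it is fat for the other. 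The common set of fat vectors is nonempty because $\operatorname{rank}K=\operatorname{rank}M$ (both equal $\operatorname{rank}G^c$, by the maximal-rank hypothesis), so any vector in the interior of a Weyl chamber is fat. One may also record, for later use, that the two canonical connections genuinely differ: under the duality isomorphism $\mathfrak p\cong i\mathfrak p\subset\mathfrak g^c$ of isotropy modules the bracket $[\,\cdot\,,\cdot\,]_{\mathfrak k}$ turns into its negative, so by \cite[Theorem 11.1]{MR97c:53001a} the two curvature forms differ by an overall sign.

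For (3) I use that in any principal bundle the differential of the projection identifies the horizontal distribution with the pullback of the tangent bundle of the base: $\mathcal H_M\cong\pi^*T(M/K)$ and $\mathcal H_{\Gamma\backslash G}\cong p^*T(\Gamma\backslash G/K)$. Pulling the first back along $\tilde\beta$ and using $\pi\circ\tilde\beta=\beta\circ p$ gives $\tilde\beta^*\mathcal H_M\cong p^*\bigl(\beta^*T(M/K)\bigr)$, which is isomorphic to $p^*T(\Gamma\backslash G/K)\cong\mathcal H_{\Gamma\backslash G}$ because Okun's map $\beta$ is tangential. For (4) I argue by contradiction. If $\tilde\beta$ preserved the two connections, then $\Theta_{\Gamma\backslash G}=\tilde\beta^*\Theta_M$ and $d\tilde\beta$ would carry $\mathcal H_{\Gamma\backslash G}$ into $\mathcal H_M$. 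Pairing with a fat vector $u$ (which exists by (2)) we would get that the symplectic form $\langle\Theta_{\Gamma\backslash G},u\rangle$ on $\mathcal H_{\Gamma\backslash G}$ is the pullback under $d\tilde\beta$ of the nondegenerate form $\langle\Theta_M,u\rangle$, forcing $d\tilde\beta$ to be injective, hence by equality of dimensions an isomorphism, on horizontal subspaces. Since $d\pi_M\circ d\tilde\beta=d\beta\circ dp$ and $d\pi_M,dp$ are isomorphisms on horizontal subspaces, $d\beta$ would then be an isomorphism everywhere, so $\beta$ would be a local diffeomorphism between closed manifolds of equal dimension, i.e. a finite covering. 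This is impossible: $M/K$ has finite fundamental group, so every finite cover of it does too, whereas $\pi_1(\Gamma\backslash G/K)=\Gamma$ is infinite.

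I expect the main obstacle to be the bookkeeping in (2) and (4): in (2), checking that the root data, the Killing-form identifications, and the Weyl-chamber walls really do match verbatim across the two real forms, so that Theorem \ref{T:lerman} transfers on the nose; and in (4), pinning down the precise sense in which connection-preservation of $\tilde\beta$ forces $d\beta$ to be a linear isomorphism, together with citing the correct topological reason --- finiteness of $\pi_1(M/K)$ versus infiniteness of $\Gamma$ --- for why $\beta$ cannot be a covering map.
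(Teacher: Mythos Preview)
Your argument is correct and tracks the paper's proof closely. Parts (1)--(3) are essentially identical to the paper's treatment (you simply supply more detail in (2), where the paper contents itself with a one-line appeal to Theorem~\ref{T:lerman}). For (4) the paper argues in the reverse order: it first observes that $M/K$ is simply connected while $B\Gamma$ is not, so $\beta$ must have singularities, and then concludes in one sentence that the bundle morphism cannot preserve the connections. You instead run the contrapositive and make the mechanism explicit, using a fat vector from (2) to show that connection-preservation would force $d\tilde\beta$ to be injective on horizontals and hence $\beta$ to be a covering. This is a genuine improvement in clarity: the paper's ``hence'' leaves unstated exactly why a singular $\beta$ obstructs connection-preservation, and your fat-vector argument fills that gap. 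Your use of ``$\pi_1(M/K)$ finite'' in place of the paper's ``simply connected'' is harmless, since either suffices to contradict the infiniteness of $\Gamma$.
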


\begin{proof}\hfill

\begin{enumerate}
\item
The existence of the pull back diagram is a direct application of
the above mentioned result of Okun. 
\item
It follows from Theorem \ref{T:lerman} that the bundles have the same
and nonempty set of fat vectors.
\item
Since the Okun map $\beta $ is tangential we get
$\beta^*(T(M/K)) \cong T(\Gamma \backslash G/K)$. On the other
hand we have the following isomorphisms of bundles
$\pi^*(T(M/K)) \cong \mathcal H_{M}$ and
$p^*(T(\Gamma \backslash G/K)) \cong \mathcal H_{\Gamma\backslash G}.$ 
Composing these isomorphisms we get the statement.
\item
Since $M/K$ is simply connected and $B\Gamma$ is not, the Okun
map has singularities. Hence the bundle morphism cannot
preserve the connections.
\end{enumerate}
\end{proof}

The following corollary is straighforward. The third part follows from
the uniqueness of the coupling class (see Section
\ref{SS:coupling_class}).

\begin{cor}\label{C:dual}
Let $\xi \in \mathfrak k^*$ be a fat vector for the connections
in Proposition \ref{P:dual} and let $H\subset K$ denote its
isotropy subgroup. Then the following statements hold; 
\begin{enumerate}
\item There is a pull back diagram of the
associated bundles.
$$
\xymatrix
{
K/H \ar[r] \ar[d] & K/H\ar[d]\\
\Gamma\backslash G/H \ar[r]^{\widehat \beta} \ar[d]^p & M/H  \ar[d]^{\pi}\\
B\Gamma \ar[r]^{\beta} & M/K 
}
$$
\item 
The map $\widehat \beta$ is tangential.
\item
The map $\widehat \beta$ preserves the cohomology classes
of symplectic forms. In other words, it is a c-symplectic morphism.
\end{enumerate}\qed
\end{cor}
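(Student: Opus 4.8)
The plan is to deduce all three parts formally from Proposition~\ref{P:dual} together with the functoriality of the associated-bundle construction and of fibre integration. For part~(1), forming the associated bundle with fibre the coadjoint orbit $K/H$ of $\xi$ commutes with pullback of principal bundles: since Proposition~\ref{P:dual}(1) realises $\tilde\beta\colon\Gamma\backslash G\to M$ as a pullback of $K$-principal bundles along $\beta$, there is a canonical isomorphism $(\beta^{*}M)\times_{K}(K/H)\cong\beta^{*}\big(M\times_{K}(K/H)\big)$; identifying $\Gamma\backslash G\times_{K}(K/H)=\Gamma\backslash G/H$ and $M\times_{K}(K/H)=M/H$, this is exactly the asserted pullback square, with $\widehat\beta[q,kH]:=[\tilde\beta(q),kH]$ a fibre-preserving morphism covering $\beta$ which is the identity $K/H\to K/H$ on each fibre.

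For part~(2) I would use, after choosing connections, the splittings $T(M/H)\cong\pi^{*}T(M/K)\oplus\mathcal V_{M}$ and $T(\Gamma\backslash G/H)\cong p^{*}T(\Gamma\backslash G/K)\oplus\mathcal V_{\Gamma\backslash G}$ into horizontal and vertical parts, where the vertical bundles are the associated bundles $\mathcal V_{M}=M\times_{K}T(K/H)$ and $\mathcal V_{\Gamma\backslash G}=\Gamma\backslash G\times_{K}T(K/H)$. Since $\pi\circ\widehat\beta=\beta\circ p$ and $\beta$ is tangential (see Proposition~\ref{P:dual}), we get $\widehat\beta^{*}\pi^{*}T(M/K)=p^{*}\beta^{*}T(M/K)\cong p^{*}T(\Gamma\backslash G/K)$; and since $\Gamma\backslash G=\beta^{*}M$ as principal bundles, the compatibility of the associated-bundle functor with pullback (now applied to the fibre $T(K/H)$, exactly as in part~(1)) gives $\widehat\beta^{*}\mathcal V_{M}\cong\mathcal V_{\Gamma\backslash G}$. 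Adding the two summands yields $\widehat\beta^{*}T(M/H)\cong T(\Gamma\backslash G/H)$, i.e. $\widehat\beta$ is tangential.

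For part~(3), write $2m=\dim(K/H)$. Because $\widehat\beta$ is the identity on fibres, $\widehat\beta^{*}[\Omega_{M/H}]$ restricts to $[\omega]$ on each fibre of $\Gamma\backslash G/H\to B\Gamma$; and because the square of part~(1) is a pullback of fibre bundles with common compact oriented fibre $K/H$, fibre integration obeys the base-change identity $p_{!}\circ\widehat\beta^{*}=\beta^{*}\circ\pi_{!}$, whence $p_{!}\big((\widehat\beta^{*}[\Omega_{M/H}])^{m+1}\big)=\beta^{*}\pi_{!}\big([\Omega_{M/H}]^{m+1}\big)=0$. Thus $\widehat\beta^{*}[\Omega_{M/H}]$ satisfies the two defining conditions of the coupling class of $\Gamma\backslash G/H\to B\Gamma$, so by the uniqueness of the coupling class (Section~\ref{SS:coupling_class}) it equals $[\Omega_{\Gamma\backslash G/H}]$; this is precisely the statement that $\widehat\beta$ is a c-symplectic morphism.

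The step requiring the most care is part~(2): one must know that $\widehat\beta$ is genuinely a fibre-preserving morphism of $K$-associated bundles induced from $\tilde\beta$, so that the two vertical tangent bundles are matched compatibly under pullback — but this is automatic once the pullback square of part~(1) is in place. Everything else is routine bookkeeping with pullbacks, the tangentiality of the Okun map, the standard base-change formula for fibre integration, and the uniqueness of the coupling class already established.
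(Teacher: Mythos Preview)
Your proposal is correct and matches the paper's approach: the paper itself gives no proof beyond the sentence ``The following corollary is straightforward. The third part follows from the uniqueness of the coupling class (see Section~\ref{SS:coupling_class})'', and what you have written is precisely a careful unpacking of that sentence---functoriality of the associated-bundle construction for~(1), tangentiality of $\beta$ plus the horizontal/vertical splitting for~(2), and base-change for fibre integration together with uniqueness of the coupling class (the fibre $K/H$ being simply connected) for~(3).
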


\begin{example}
Applying the above proposition to the twistor bundle over 
a hyperbolic manifold $X:=\Gamma \backslash SO(2n,1)/SO(2n)$
we obtain the following pull back diagram.
$$
\xymatrix
{
SO(2n)/U(n) \ar[r] \ar[d] & SO(2n)/U(n)\ar[d]\\
\Gamma\backslash SO(2n,1)/U(n) \ar[r] \ar[d] & SO(2n+1)/U(n)  \ar[d]\\
X \ar[r] & S^{2n}
}
$$

The lattice $\Gamma$ is a non-K\"ahler group (see Theorem 6.22 in
\cite{MR97d:32037}) and hence the symplectic manifold 
$\Gamma \backslash SO(2n,1)/SO(2n)$ 
is not K\"ahler. On the other hand the
manifold $SO(2n+1)/U(n)$ is K\"ahler.
\end{example}

\begin{example}
Let $G$ be a noncompact real form of a semisimple complex
Lie group $G^c$. Let $P\subset G^c$ be a parabolic subgroup
such that  $H=G\cap P$ is compact. Let $\Gamma \subset G$
be a suitable cocompact and irreducible lattice. We obtain 
the pair of dual manifolds
$$
\Gamma \backslash G/H \text{ and } M/H
$$
and, according to Griffiths and Schmid the first one
is complex. In general it is not K\"ahler.  The second
manifold is always K\"ahler.  
\end{example}

\section{Applications to Hamiltonian characteristic classes}
\label{S:applications}

\subsection{Hamiltonian characteristic classes defined by the
coupling class}\label{SS:coupling_class}

The cohomology class of a coupling form is called the 
{\bf {\em coupling class}}. 
If either the fibre or the base is simply connected then the coupling
class is unique. This follows directly from the two conditions
defining the coupling form and the Leray-Serre spectral sequence for
the fibration.  Since the classifying space of the group of
Hamiltonian diffeomorphisms is simply connected, the coupling class
can be defined universally as the cohomology class $\Omega \in
H^2(M_{\Ham})$ where
$$
\Mo\to M_{\Ham}\stackrel{\pi}\to B\Ham\Mo
$$
is the universal Hamiltonian fibration for $\Mo$.
By integrating the powers of the coupling class
we get Hamiltonian characteristic classes
$\mu_k:=\pi_!(\Omega^{n+k})\in H^{2k}(B\Ham\Mo).$

Let $\Mo\to E \stackrel{p}\to B$ be a Hamiltonian bundle classified by
a map $c:B\to B\Ham\Mo.$ The uniqueness of the coupling class and the
functoriality of the fibre integration implies that the characteristic
class $\mu_k(E):=c^*(\mu_k)$ is equal to the fibre integral
$p_!(\Omega_E^{n+k})$, where $\Omega_E$ is the coupling class of the
bundle $E.$ The following proposition is straightforward (cf. Theorem 4.1
in Weinstein \cite{MR82a:53038}).

\begin{proposition}\label{P:characteristic}
Let $(M^{2n},\omega)\to E\to B$ be a Hamiltonian bundle over
$2k$-dimensional base.  If the coupling form $\Omega_E$ is
nondegenerate then the characteristic class $\mu_k(E)$ is nonzero.
\qed
\end{proposition}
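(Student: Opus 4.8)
The plan is to combine the nondegeneracy hypothesis with the defining property $\pi_!(\Omega_E^{n+1})=0$ of the coupling class and the multiplicativity of fibre integration. First I would observe that since $\Omega_E$ is nondegenerate on the total space $E$, which has dimension $2n+2k$, the top power $\Omega_E^{n+k}$ is a volume form on $E$; in particular it is a nonzero element of $H^{2n+2k}(E;\mathbb R)$, since integrating it over $E$ gives a positive number (assuming $E$ is closed and oriented, which follows from $M$ and $B$ being closed and oriented—$B$ is oriented because $\Omega_E$ restricts to a nondegenerate, hence orienting, form on the base complement of the fibre, or more directly because a $2k$-manifold carrying a top class pushed forward from a bundle is oriented by the fibrewise symplectic orientation).

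Next I would apply the projection formula for the fibre integration $\pi_!\colon H^*(E)\to H^{*-2n}(B)$. Writing the class as $\Omega_E^{n+k}=\Omega_E^{n+1}\cdot \Omega_E^{k-1}$ would be the wrong split; instead I would use $\Omega_E^{n+k} = \Omega_E^{n}\cdot \Omega_E^{k}$ is not obviously helpful either. The clean argument is: $\mu_k(E)=\pi_!(\Omega_E^{n+k})\in H^{2k}(B)$, and its evaluation on the fundamental class $[B]$ equals, by the definition of the fibre integral as integration over the fibre, the integral $\int_E \Omega_E^{n+k}$. Since $\Omega_E$ is nondegenerate, $\Omega_E^{n+k}$ is (up to a positive constant $(n+k)!$ times a positive density) a volume form, so this integral is strictly positive. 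Hence $\langle \mu_k(E),[B]\rangle \neq 0$, which forces $\mu_k(E)\neq 0$.

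The only genuinely non-routine point is justifying $\int_E \Omega_E^{n+k} = \langle \pi_!(\Omega_E^{n+k}),[B]\rangle$; this is the standard fact that fibre integration followed by integration over the base equals integration over the total space, valid for any oriented fibre bundle with closed fibre and closed oriented base, and it is precisely the statement that $\pi_!$ is the Gysin/umkehr map adjoint to $\pi^*$. I would cite this rather than prove it. One should also check orientability of $E$: the fibre $M$ is oriented by $\omega^n$, and a Hamiltonian bundle has structure group in $\Ham\Mo$, which acts preserving this fibrewise orientation; together with an orientation of $B$ this orients $E$. But in fact we do not even need $B$ a priori oriented—nondegeneracy of $\Omega_E$ makes $E$ symplectic, hence canonically oriented, and then $\mu_k(E)\neq 0$ because $\pi_!$ of a class that is nonzero in top cohomology and lands in top cohomology of $B$ cannot vanish unless that top class is zero. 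So the argument is essentially a one-line consequence once the fibre-integration identity is invoked; there is no serious obstacle.
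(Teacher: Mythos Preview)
Your argument is correct and is exactly the straightforward computation the paper has in mind; the paper itself gives no proof at all (the proposition carries a bare \verb|\qed| and is declared ``straightforward'' with a reference to Weinstein). The only unnecessary detours are the aborted ``split'' discussion and the extended orientability digression---you can simply note that $\Omega_E$ nondegenerate makes $\Omega_E^{n+k}$ a volume form, and that $\int_B \pi_!(\Omega_E^{n+k}) = \int_E \Omega_E^{n+k} \neq 0$.
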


\subsection{Fat bundles over spheres}\label{SS:spheres}
Applying the last proposition to the examples of
fat bundles over spheres we obtain the following
result.

\begin{theorem}\label{T:characteristic}
Let $SO(2n)\to P\to S^{2n}$ be the frame bundle. Let $\xi \in
\mathfrak{so}(2n)^*$ be a fat vector with respect to the canonical
connection (see Section \ref{SS:twistor}) and let $M_{\xi}$ denote its
coadjoint orbit.  Then the class $\mu_n\in H^*(B\Ham(M_{\xi}))$ is a
nonzero indecomposable element. \qed
\end{theorem}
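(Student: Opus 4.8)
The plan is to combine three ingredients: (i) Proposition~\ref{P:twistor}, which produces a fat vector $\xi$ for the canonical connection in $SO(2n)\to SO(2n+1)\to S^{2n}$ and hence, via Theorem~\ref{T:weinstein}, a nondegenerate coupling form on the associated bundle $M_\xi\to P\times_{SO(2n)}M_\xi\to S^{2n}$; (ii) Proposition~\ref{P:characteristic}, which then gives that the characteristic class $\mu_n(E)=c^*(\mu_n)\in H^{2n}(S^{2n})$ is nonzero, where $c:S^{2n}\to B\Ham(M_\xi)$ classifies the bundle; and (iii) a standard argument that a cohomology class pulling back nontrivially to a sphere must be indecomposable. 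So first I would recall that the frame bundle $SO(2n)\to P\to S^{2n}$ is exactly the orthonormal frame bundle of the round metric, whose Levi-Civita connection is the canonical invariant connection in $SO(2n)\to SO(2n+1)\to S^{2n}$; thus Proposition~\ref{P:twistor} applies verbatim and $\xi=B(J,-)$ is fat. For a general fat vector $\xi$, the same conclusion holds by hypothesis. Consequently the coadjoint orbit $M_\xi$ carries a fibrewise symplectic form extending to a nondegenerate coupling form $\Omega_E$ on the total space $E:=P\times_{SO(2n)}M_\xi$.

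Next I would apply Proposition~\ref{P:characteristic} with base $B=S^{2n}$, which is $2n$-dimensional: since $\Omega_E$ is nondegenerate, $\mu_n(E)=p_!\big(\Omega_E^{\,m+n}\big)\ne 0$ in $H^{2n}(S^{2n})\cong\B R$, where $2m=\dim M_\xi$. By the functoriality explained in Section~\ref{SS:coupling_class}, $\mu_n(E)=c^*(\mu_n)$ where $\mu_n\in H^{2n}(B\Ham(M_\xi))$ is the universal class and $c:S^{2n}\to B\Ham(M_\xi)$ is the classifying map. Hence $c^*(\mu_n)\ne 0$.

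Finally, the indecomposability: suppose $\mu_n=\sum_i a_i\smile b_i$ with $a_i,b_i\in H^{>0}(B\Ham(M_\xi))$ of positive degree, i.e.\ $\mu_n$ lies in the square of the augmentation ideal. Then $c^*(\mu_n)=\sum_i c^*(a_i)\smile c^*(b_i)$, and each factor $c^*(a_i)$, $c^*(b_i)$ sits in $H^{j}(S^{2n})$ with $0<j<2n$. But $\widetilde H^{j}(S^{2n})=0$ for $0<j<2n$, so every term vanishes and $c^*(\mu_n)=0$, contradicting the previous paragraph. Therefore $\mu_n$ cannot be written as such a sum, i.e.\ it is indecomposable in $H^*(B\Ham(M_\xi))$. (One should note that $B\Ham(M_\xi)$ is simply connected, so the coupling class and hence $\mu_n$ is well-defined, as recalled in Section~\ref{SS:coupling_class}; this is what legitimizes speaking of the universal $\mu_n$.)

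The only genuine subtlety — and the step I expect to require the most care — is the identification of the classifying map and the verification that $\mu_n$ is nonzero as a \emph{real} cohomology class rather than merely being nonzero with some coefficients; this is handled cleanly by the fibre-integration formula $\mu_n(E)=p_!(\Omega_E^{m+n})$ together with the fact that a nondegenerate closed $2$-form on a compact manifold has $\int \Omega_E^{\dim E/2}\ne 0$, so the fibre integral over $S^{2n}$ of the top power is a positive multiple of the fundamental class. Everything else is formal: the sphere has no cup products between its positive-dimensional cohomology, which forces indecomposability of any class detected on it.
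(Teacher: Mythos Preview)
Your argument is correct and follows essentially the same approach as the paper: you use the fat bundle over $S^{2n}$ from Section~\ref{SS:twistor} together with Proposition~\ref{P:characteristic} to conclude $c^*(\mu_n)\neq 0$ in $H^{2n}(S^{2n})$, and then observe that any decomposition of $\mu_n$ into products of positive-degree classes would pull back to zero on the sphere. The paper's proof is precisely this, stated more tersely in the paragraph following the theorem.
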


Indecomposability means that the class is not a sum of products of
classed of positive degree. Indeed, if the $\mu_k$ class was a sum of
products its pull back evaluated over a sphere would be zero
and this would contradict with the results of Section
\ref{SS:twistor}. In other words, the $\mu_k$ can be chosen to
be a generator of the cohomology ring of the classifying space of
the group of Hamiltonian diffeomorphisms of $M_{\xi}.$

\subsection{Hamiltonian actions of $SU(2)$}
The bundle $\mathbb{CP}^1\to \mathbb{CP}^2 \to \mathbb{HP}^1=S^4$
admits a fibrewise symplectic structure for every symplectic form on
the fibre. Hence every nonzero vector in $\mathfrak{su}(2)$ is fat
with respect to the induced connection in the associated principal
bundle $SU(2) \to P \to S^4$, see Example \ref{E:su(2)}.
This proposition has been first proved by Reznikov in \cite{MR2000f:53116} 
and its generalization by K\k edra and McDuff in \cite{MR2115670}.

\begin{proposition}\label{P:SU(2)}
Let $SU(2)\to \Ham\Mo$ be a nontrivial Hamiltonian action. It induces
a surjective homomorphism
$$
H^4(B\Ham\Mo;\mathbb R)\to H^4(BSU(2);\mathbb R)
$$
\end{proposition}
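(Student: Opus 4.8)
The plan is to reduce the statement to the fatness result already established in Example \ref{E:su(2)} together with the characteristic class machinery of Section \ref{SS:coupling_class}. A nontrivial Hamiltonian action $SU(2)\to \Ham\Mo$ is classified by a map $BSU(2)\to B\Ham\Mo$, and the induced map on $H^4(-;\mathbb R)$ is what we must show to be surjective. Since $H^4(BSU(2);\mathbb R)\cong \mathbb R$ is generated by (a multiple of) the second Chern class $c_2$, it suffices to exhibit a single class in $H^4(B\Ham\Mo)$ whose pullback to $H^4(BSU(2))$ is nonzero. The natural candidate is the Hamiltonian characteristic class $\mu_n = \pi_!(\Omega^{n+1})$, where $\dim M = 2n$ (note that for the nontrivial action the relevant fibre-integral degree is the one matching $\dim BSU(2) = 4$, so in fact we want $\mu_2$, i.e. the class $\pi_!(\Omega^{n+2})\in H^4(B\Ham\Mo)$).

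First I would test this class on the standard $SU(2)$-bundle over $S^4$. Consider the principal bundle $SU(2)\to P\to S^4$ arising from $\mathbb{CP}^1\to \mathbb{CP}^3\to \mathbb{HP}^1$ as in Example \ref{E:su(2)}; there every nonzero vector in $\mathfrak{su}(2)^*$ is fat, so for any nonzero coadjoint orbit $M_\xi = (\mathbb{CP}^1, r\cdot\omega_1)$ the associated bundle $M_\xi \to P\times_{SU(2)} M_\xi \to S^4$ carries a nondegenerate coupling form. By Proposition \ref{P:characteristic}, applied with $\dim M_\xi = 2$ and $\dim S^4 = 4$, the characteristic class $\mu_2$ of this bundle — that is, $c^*(\mu_2)$ where $c:S^4\to B\Ham(M_\xi)$ classifies the bundle — is a nonzero class in $H^4(S^4;\mathbb R)$. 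Since the bundle $P\to S^4$ is classified by a generator of $\pi_4(BSU(2))\cong \pi_3(SU(2))\cong \mathbb Z$, and the composite $S^4\to BSU(2)\to B\Ham(M_\xi)$ is exactly $c$, we conclude that $\mu_2$ pulls back to a nonzero element of $H^4(S^4)$; hence the map $H^4(BSU(2))\to H^4(S^4)$ induced by $S^4\to BSU(2)$ is an isomorphism and the image of $\mu_2$ in $H^4(BSU(2);\mathbb R)$ is nonzero.

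Next I would handle the general nontrivial action $SU(2)\to \Ham\Mo$ on an arbitrary closed symplectic manifold. By functoriality of the coupling class and of fibre integration (the argument of Section \ref{SS:coupling_class}), the class $\mu_2\in H^4(B\Ham\Mo)$ pulls back along the classifying map $\rho:BSU(2)\to B\Ham\Mo$ to a class $\rho^*\mu_2\in H^4(BSU(2);\mathbb R)$; surjectivity of $\rho^*$ on $H^4$ is equivalent to $\rho^*\mu_2\neq 0$. To see this, restrict further to a generator $S^4\to BSU(2)$: the composite $S^4\to B\Ham\Mo$ classifies the $\Ham\Mo$-bundle over $S^4$ obtained from the nontrivial $SU(2)$-bundle over $S^4$ via the action. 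The main obstacle — and the only real content beyond bookkeeping — is to show that this associated Hamiltonian bundle over $S^4$ has nondegenerate coupling form, equivalently admits a fat vector for the induced connection. Here I would invoke the second statement of Theorem \ref{T:weinstein} together with the Lemma on reductions: the holonomy of the induced connection reduces to (a quotient of) $SU(2)$, acting on $M$ via the given moment map $\Psi:M\to\mathfrak{su}(2)^*$; and on the standard $SU(2)$-bundle over $S^4$ \emph{every} nonzero vector in $\mathfrak{su}(2)^*\cong \mathfrak{psu}(2)^*$ is fat by Example \ref{E:su(2)}. Since the action is nontrivial, $\Psi$ is non-constant, so $\Psi(M)$ contains a nonzero vector, which is then fat; by the reduction lemma the corresponding vector in $\mathfrak{ham}\Mo^*$ is fat, so the coupling form is nondegenerate. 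Proposition \ref{P:characteristic} then gives $\mu_2\neq 0$ over $S^4$, hence $\rho^*\mu_2\neq 0$, completing the proof.
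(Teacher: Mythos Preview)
Your argument has a genuine gap in the general case. You claim that a single nonzero vector $\Psi(x)\in\mathfrak{su}(2)^*$ being fat forces the coupling form on $E=P\times_{SU(2)}M\to S^4$ to be nondegenerate. But the horizontal part of the coupling form at $[p,m]$ is $\langle\Psi(m),\Theta_p\rangle$, so it degenerates precisely where $\Psi(m)=0$. Global nondegeneracy therefore requires $0\notin\Psi(M)$, and this fails whenever the $SU(2)$--action has a fixed point (a fixed point must map to the unique coadjoint-fixed vector $0\in\mathfrak{su}(2)^*$); for instance $SU(2)\subset SU(3)$ acting on $\mathbb{CP}^2$ fixes $[0{:}0{:}1]$. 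Your appeal to Theorem~\ref{T:weinstein} and the reduction lemma does not repair this: for $x\in M\subset\mathfrak{ham}(M,\omega)^*$ to be a fat point for the induced $\Ham\Mo$--connection one needs nondegeneracy over the entire $\Ham\Mo$--principal bundle, and running over a fibre replaces $x$ by $\phi\cdot x$ for arbitrary $\phi\in\Ham\Mo$; the condition becomes $\Psi(m)\neq 0$ for every $m\in M$, not just for one. So when $0\in\Psi(M)$ there is no fat point at all, and you cannot invoke Proposition~\ref{P:characteristic}.

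The paper's proof supplies exactly the missing idea. One does not need $\Omega$ to be symplectic everywhere: it is nondegenerate on the open set $E\setminus\bigl(P\times_{SU(2)}\Psi^{-1}(0)\bigr)$, which is connected and of full measure, so the top power $\Omega^{n+2}$ has constant sign there and $\int_E\Omega^{n+2}\neq 0$. This gives $\mu_2(E)\neq 0$ in $H^4(S^4;\mathbb R)$ directly, bypassing Proposition~\ref{P:characteristic}, and hence the surjectivity on $H^4$.
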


\begin{proof}
Consider the associated bundle
$$
\Mo \to E:=P\times_{SU(2)}M \to S^4
$$
where $P$ is the above principal bundle endowed with a connection
with respect to which all nonzero vectors are fat. Let
$\mu:M\to \mathfrak{su}(2)^*$ be the moment map.
The coupling form for the above fat connection is nondegenerate
away the subset
$$
P\times _{SU(2)}\mu^{-1}(0) \subset E.
$$
Since the complement of this subset is connected and of full measure,
we get that the integral of the top power of the coupling form over
$E$ is nonzero. Hence the fibre integral of the top power is nontrivial
in $H^4(S^4;\mathbb R)$ which proves the statement.
\end{proof}

\bibliography{../bib/bibliography}
\bibliographystyle{plain}

\end{document}